\newcommand{\cK}	{\mathcal K}
\newcommand{\cP}	{\mathcal P}
\newcommand{\cQ}	{\mathcal Q}
\newcommand{\C}[1][n]	{C_1(\R^{#1})}
\newcommand{\Cp}[1][n]	{C_p(\R^{#1})}
\newcommand{\gln}[1][n]	{\operatorname{GL}(#1)}
\newcommand{\h}[2]	{h\!\left( #1 , #2 \right)}
\newcommand{\htiny}[2]	{h\left( #1 , #2 \right)}
\newcommand{\Lp}	{L_p}
\newcommand{\N}		{\mathbb N}
\newcommand{\oEpp}	{\operatorname{E}_p^+}
\newcommand{\oEpm}	{\operatorname{E}_p^-}
\newcommand{\oFpp}	{\operatorname{F}_p^+}
\newcommand{\oFpm}	{\operatorname{F}_p^-}
\newcommand{\oIpp}	{\operatorname{I}_p^+}
\newcommand{\oIpm}	{\operatorname{I}_p^-}
\newcommand{\oJpp}	{\operatorname{J}_p^+}
\newcommand{\oJpm}	{\operatorname{J}_p^-}
\newcommand{\oM}	{\operatorname{M}}
\newcommand{\oMp}	{\operatorname{M}_p}
\newcommand{\oMpp}	{\operatorname{M}_p^+}
\newcommand{\oMpps}	{\hat{\operatorname{M}}_p^+}
\newcommand{\oMpm}	{\operatorname{M}_p^-}
\newcommand{\oMpms}	{\hat{\operatorname{M}}_p^-}
\newcommand{\oPhi}	{\operatorname{\Phi}}
\newcommand{\oPsi}	{\operatorname{\Psi}}
\newcommand{\padd}	{+_p}
\newcommand{\R}		{\mathbb R}
\newcommand{\s}[2]	{\left\langle #1 , #2 \right\rangle}
\newcommand{\sln}[1][n]	{\operatorname{SL}(#1)}
\DeclareMathOperator{\vol}{vol}
\DeclareMathOperator{\aff}{aff}
\DeclareMathOperator{\conv}{conv}
\DeclareMathOperator{\lin}{lin}
\DeclareMathOperator{\sgn}{sgn}
\theoremstyle{plain}
	\newtheorem{theorem}{Theorem}[section]
	\newtheorem{lemma}[theorem]{Lemma}
	\newtheorem{corollary}[theorem]{Corollary}
	\newtheorem*{theorem*}{Theorem}
\theoremstyle{remark}
	\newtheorem{remark}[theorem]{Remark}
\title{$\sln$-Covariant $\Lp$-Minkowski Valuations}
\author{Lukas Parapatits}
\date{}
\begin{document}
	\maketitle

	\begin{abstract}
		All continuous $\sln$-covariant $\Lp$-Minkowski valuations defined on convex bodies are completely classified.
		The $\Lp$-moment body operators turn out to be the nontrivial prototypes of such maps. \\[0.5cm]
		Mathematics subject classification: 52B45 (primary), 52A20 (secondary)
	\end{abstract}

	\section{Introduction}
A valuation is a finitely additive function on convex bodies,
i.e.\ nonempty compact convex subsets of $\R^n$.
More general, let $\cQ^n$ be a subset of $\cK^n$, the set of convex bodies in $\R^n$,
and let $A$ be an abelian monoid.
A map $\oPhi \colon \cQ^n \to A$ is called a valuation, if
\begin{equation}\label{eq: def val}
	\oPhi( K \cup L ) + \oPhi( K \cap L ) = \oPhi( K ) + \oPhi( L )
\end{equation}
for all $K,L \in \cQ^n$ with $K \cup L, K \cap L \in \cQ^n$.
At the beginning of the last century Dehn's solution of Hilbert's Third Problem sparked a lot of interest in valuations.
In the 1950's Hadwiger started a systematic study of valuations,
which resulted in his famous classification of all continuous rigid motion invariant real valued valuations
(see e.g. \cite{Schneider1993}).
Numerous results in the spirit of Hadwiger's theorem were established over the last years
(see e.g. \cite{Alesker1999, Alesker2001, Bernig2009, BernigFu2011,
Klain2000, Ludwig2002_2, LudwigReitzner1999, LudwigReitzner2010, McMullen1993}).

Apart from real valued valuations convex body valued valuations are the focus of increased attention
(see e.g. \cite{Haberl2009, Haberl2011, Haberl_2, HaberlParapatits_1, Kiderlen2006, Ludwig2002_1, Ludwig2003,
Ludwig2005, Ludwig2006, Lutwak1990, ParapatitsSchuster_1, SchneiderSchuster2006, Schuster2010, SchusterWannerer2012, Wannerer_1}).
The simplest way to define an addition on $\cK^n$ such that it becomes an abelian monoid is Minkowski addition,
i.e.\ $K+L := \{x+y : x \in K, y \in L\}$ for $K,L \in \cK^n$.
The corresponding Minkowski valuations and their generalizations attracted a lot of attention in recent years,
because they have applications in many areas such as
convexity, stochastic geometry, functional analysis and geometric tomography
(see e.g. \cite{CampiGronchi2002, CianchiLutwakYangZhang2009, FleuryGuedonPaouris2007, HaberlSchuster2009_1, HaberlSchuster2009_2, Gardner2006,
GardnerKiderlenMilanfar2006, Kiderlen2008, Lutwak1990, LutwakYangZhang2000_1, LutwakYangZhang2002_2, LutwakYangZhang2004,
ParapatitsSchuster_1, SchneiderWeil2008, SchusterWannerer2012, Steineder2008, YaskinYaskina2006, Wannerer_1, WernerYe2008, Zhang1999}).

This article focuses on valuations $\oPhi$ which are $\sln$-covariant,
i.e.\ $\oPhi(\phi K) = \phi \oPhi K$ for all $K \in \cK^n$ and $\phi \in \sln$.
Clearly the identity $K \mapsto K, \ K \in \cK^n$, and the reflection at the origin $K \mapsto -K, \ K \in \cK^n$,
are $\sln$-covariant Minkowski valuations.
Another important example is the moment body operator $\oM \colon \cK^n \to \cK^n_o$ defined by
	$$ \h{\oM K}{u} = \int_K |\s{x}{u}| \ dx, \quad u \in \R^n, $$
for all $K \in \cK^n$, where $\cK^n_o$ denotes the set of convex bodies containing the origin
and where $\h{K}{u} := \max_{x \in K} \s{x}{u}, \ u \in \R^n,$ is the support function of the convex body $K$.
For a volume normalized convex body this integral can be interpreted as the expectation of a certain mass distribution
depending on $K$ and $u$.
Moment bodies (under a different normalization) are also called centroid bodies
and were formally defined by Petty \cite{Petty1961},
but they actually date back to Dupin and Blaschke.
The name ``centroid body'' comes from the fact that for a symmetric convex body $K$
the boundary of $\oM K$ consists of points
which are, up to normalization, the centroids of $K$ intersected with halfspaces.
Ludwig \cite[Corollary 1.1]{Ludwig2005} showed that the only examples
of continuous homogeneous $\sln$-covariant Minkowski valuations on $\cK^n_o$
are either trivial, i.e.\ a combination of the identity and the reflection at the origin,
or a combination of the moment body operator and the moment vector operator.
Assuming $\sln$-covariance together with homogeneity is basically assuming $\gln$-covariance,
where one has to take the determinant of $\phi \in \gln$ into account.
Once the valuations that were $\gln$-covariant were understood,
the next step was in trying to classify those valuations
that were \textit{only} $\sln$-covariant.
This turned out to be difficult, because the involved functional equations were a lot more complicated.
A real breakthrough here was first achieved by Haberl in \cite{Haberl_2},
where he removed the assumption on homogeneity in Ludwig's classification.

A generalization of Minkowski addition is $L_p$-Minkowski addition
(also known as Minkowski-Firey $L_p$-addition) for $p > 1$,
which is defined by
\begin{equation}\label{eq: def padd}
	\h{K \padd L}{.}^p = \h{K}{.}^p + \h{L}{.}^p
\end{equation}
for all $K,L \in \cK^n_o$, the set of convex bodies containing the origin.
The corresponding $L_p$-Minkowski valuations also received a lot of attention in recent years
(see e.g. \cite{CampiGronchi2002, FleuryGuedonPaouris2007, LutwakYangZhang2000_2, GardnerGiannopoulos1999,
Haberl2008, HaberlLudwig2006, HaberlSchuster2009_2,
Ludwig2005, LutwakYangZhang2000_1, LutwakYangZhang2004, Parapatits_1,
WernerYe2008, YaskinYaskina2006}).
In particular, they play an important role in new affine Sobolev inequalities
(see e.g. \cite{CianchiLutwakYangZhang2009, HaberlSchuster2009_1, LutwakYangZhang2002_2, Zhang1999}).

The identity and the reflection at the origin, both restricted to $\cK^n_o$,
are also $\sln$-covariant $\Lp$-Minkowski valuations.
Furthermore, the (symmetric) $\Lp$-moment body operator $\oMp \colon \cK^n \to \cK^n_o$ defined by
	$$ \h{\oMp K}{u}^p = \int_K |\s{x}{u}|^p \ dx, \quad u \in \R^n $$
for all $K \in \cK^n$,
which encodes the $p$-th moment of the aforementioned mass distribution,
is an $\sln$-covariant $\Lp$-Minkowski valuation.
For the definitions of the (asymmetric) $\Lp$-moment body operators $\oMpp$ and $\oMpm$ see \eqref{eq: def oMpp}.
Ludwig \cite[Corollary after Theorem $1_p$]{Ludwig2005} also showed that the only nontrivial examples
of continuous homogeneous $\sln$-covariant $\Lp$-Minkowski valuations on $\cK^n_o$
are combinations of the two (asymmetric) $\Lp$-moment body operators.
Note that Ludwig uses the notation $M_p^{\tau} = (1+\tau) \oMpp \padd (1-\tau) \oMpm$ for $\tau \in [-1,1]$.
Our first main theorem improves this classification by removing the homogeneity assumption.

\begin{theorem*}
	Let $n \geq 3$.
	An operator $\oPhi \colon \cK^n_o \to \cK^n_o$ is
	a continuous $\sln$-covariant $\Lp$-Minkowski valuation,
	if and only if there exist constants $c_1, c_2, c_3, c_4 \geq 0$ such that
	\begin{equation*}
		\oPhi K = c_1 \oMpp K \padd c_2 \oMpm K \padd c_3 K \padd c_4 (-K)
	\end{equation*}
	for all $K \in \cK^n_o$.
\end{theorem*}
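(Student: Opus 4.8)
\noindent\emph{Proof strategy.}
The ``if'' part is routine: the operators $\oMpp$ and $\oMpm$, and hence $\oMp$ and $\oM$, are $\Lp$-Minkowski valuations because integration over convex bodies satisfies $\int_{K\cup L}+\int_{K\cap L}=\int_K+\int_L$; they are continuous by dominated convergence together with the uniform boundedness of the bodies in a convergent sequence; and they are $\sln$-covariant since for $\phi\in\sln$ the substitution $x\mapsto\phi x$ has unit Jacobian, so that $\h{\oMpp\phi K}{u}^p=\h{\oMpp K}{\phi^t u}^p=\h{\phi\oMpp K}{u}^p$. The identity and $K\mapsto-K$ on $\cK^n_o$ are plainly continuous $\sln$-covariant $\Lp$-Minkowski valuations, and since $\padd$ is continuous, commutative and associative and multiplication by a scalar $c\ge0$ preserves all three properties, every combination $c_1\oMpp K\padd c_2\oMpm K\padd c_3K\padd c_4(-K)$ is one as well.

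For the converse, let $\oPhi$ be a continuous $\sln$-covariant $\Lp$-Minkowski valuation. Raising support functions to the $p$-th power turns $\padd$ into pointwise addition, so for each fixed $u\in\R^n$ the functional $K\mapsto\h{\oPhi K}{u}^p$ is a continuous real-valued valuation on $\cK^n_o$ in the ordinary additive sense. The first step is to reduce to simplices. Every polytope $P\in\cK^n_o$ admits a triangulation into finitely many simplices each having $o$ as a vertex --- if $o$ is not already a vertex, subdivide first so that it becomes one --- and in such a triangulation every intersection of member simplices is again a simplex of this type, of possibly smaller dimension. Inclusion--exclusion for the additive valuation $K\mapsto\h{\oPhi K}{u}^p$ then expresses $\h{\oPhi P}{u}$ through the values of $\oPhi$ on simplices with a vertex at $o$, and continuity carries this over to all of $\cK^n_o$. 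Hence $\oPhi$ is determined by its restriction to such simplices. Since $\sln$ acts transitively on simplices of a given volume with a vertex at $o$, that restriction is encoded by the single curve $\lambda\mapsto\oPhi(\lambda T^n)$, $\lambda>0$, where $T^n=\conv\{o,e_1,\dots,e_n\}$; equivariance moreover forces $\h{\oPhi(\lambda T^n)}{\cdot}^p$ to be invariant under the stabilizer of $T^n$ in $\sln$, which for $n\ge3$ contains all even coordinate permutations.

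The core of the argument is to produce functional equations for this curve by cutting $T^n$ with hyperplanes through the origin. If a hyperplane $H\ni o$ splits an $o$-simplex $T$ into $o$-simplices $T_1,T_2$ with $T_1\cap T_2$ an $(n-1)$-dimensional $o$-simplex, the valuation property gives $\h{\oPhi T}{u}^p+\h{\oPhi(T_1\cap T_2)}{u}^p=\h{\oPhi T_1}{u}^p+\h{\oPhi T_2}{u}^p$. Rewriting each term, via $\sln$-covariance, in terms of $\lambda\mapsto\oPhi(\lambda T^n)$ evaluated along suitably transformed directions, and combining several such cuts with the coordinate symmetry, one distills a Cauchy-type identity $g(s+t)=g(s)+g(t)$ for the real functions $g$ obtained by pairing $\h{\oPhi(\lambda T^n)}{\cdot}^p$ against the relevant directions --- now with no homogeneity to fall back on. Continuity of $\oPhi$ forces these continuous solutions of the Cauchy equation to be linear, and feeding the resulting linear expressions back shows that, with $\lambda$-independent constants $c_1,\dots,c_4$, one has $\h{\oPhi(\lambda T^n)}{u}^p = c_1\h{\oMpp(\lambda T^n)}{u}^p + c_2\h{\oMpm(\lambda T^n)}{u}^p + c_3\h{\lambda T^n}{u}^p + c_4\h{-\lambda T^n}{u}^p$ for $u$ in general position, hence for all $u$ by continuity. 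Nonnegativity of the $c_i$ is then forced by the requirement that the right-hand side be the $p$-th power of a genuine support function for every $\lambda$, and testing on $T^n$ together with a segment $[o,v]$ through the origin, on which the moment terms vanish, identifies the four constants. Since $\oPhi$ and the operator $K\mapsto c_1\oMpp K\padd c_2\oMpm K\padd c_3 K\padd c_4(-K)$ are continuous $\sln$-covariant $\Lp$-Minkowski valuations agreeing on all simplices with a vertex at $o$, they coincide on $\cK^n_o$.

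The main obstacle is exactly the absence of homogeneity --- this is a genuine strengthening of Ludwig's theorem and the $\Lp$-analogue of Haberl's non-homogeneous classification. One cannot normalise away the parameter $\lambda$, so the functional equations must be organised around an unknown \emph{function} of a real variable rather than a single constant, and the pathological nonlinear solutions of the Cauchy equation must be excluded by combining continuity with the geometric constraint that the output remains a support function to the $p$. A further, more technical difficulty is the consistent bookkeeping of the cone triangulations and hyperplane cuts --- keeping track of the strata $o\in\interior K$ and $o\in\partial K$ and reducing lower-dimensional simplices to limits of full-dimensional ones --- so that the curve $\lambda\mapsto\oPhi(\lambda T^n)$ is recovered unambiguously. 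This is also where $n\ge3$ is used: for $n=2$ the stabilizer of the triangle in $\sln$ is trivial, the coordinate symmetry that closes up the functional equations disappears, and the planar classification is genuinely different.
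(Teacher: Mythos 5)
The high-level shape of your argument — reduce to the curve $\lambda \mapsto \oPhi(\lambda T^n)$, cut $T^n$ with hyperplanes through the origin to get a functional equation, turn this into a Cauchy equation, and use continuity to kill pathological solutions — matches the paper's strategy and is sound as far as it goes. The ``if'' direction and the reduction to simplices with a vertex at $o$ via \th\ref{valuation determined simplices} are fine, and you correctly identify that the absence of homogeneity and the planar anomalies are where the real work lies.

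The genuine gap is in how you handle the lower-dimensional correction term. Cutting an $o$-simplex $T$ by a hyperplane $H\ni o$ gives, as you write,
\begin{equation*}
\h{\oPhi T}{u}^p + \h{\oPhi(T\cap H)}{u}^p = \h{\oPhi T_1}{u}^p + \h{\oPhi T_2}{u}^p ,
\end{equation*}
and for a non-simple $\oPhi$ the term $\h{\oPhi(T\cap H)}{u}^p$ does not vanish. You assert that ``combining several such cuts with the coordinate symmetry, one distills a Cauchy-type identity'' and that, after solving it, the answer comes out as $c_1\h{\oMpp(\lambda T^n)}{u}^p + c_2\h{\oMpm(\lambda T^n)}{u}^p + c_3\h{\lambda T^n}{u}^p + c_4\h{-\lambda T^n}{u}^p$. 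But the Cauchy argument you describe (dividing by the relevant power of $\det$ and pairing with a direction orthogonal to the cutting plane) produces a Cauchy equation only when the $(n-1)$-dimensional term drops out, i.e.\ only for \emph{simple} valuations, and its solution forces $(n+p)$-homogeneity — it can only ever produce the moment-body terms, never the $p$-homogeneous $\h{\lambda T^n}{\cdot}^p$ and $\h{-\lambda T^n}{\cdot}^p$. Those do not fall out of the Cauchy equation on the curve; they must be extracted \emph{first}. The paper does this by reading off $c_3,c_4$ from $\oPhi([o,e_1])$ (using the projection lemma \th\ref{oPhi P subseteq lin P}), subtracting $c_3\h{\oIpp\cdot}{\cdot}^p + c_4\h{\oIpm\cdot}{\cdot}^p$, and then proving the remainder is simple by an induction on the dimension $d$ of the ambient subspace, applying a classification of $p$-homogeneous $\gln[d]$-covariant simple valuations (\th\ref{sln co simple p homogeneous}). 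Your outline contains no mechanism for establishing this simplicity and hence no legitimate route to the Cauchy equation.

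A second, related gap is the role of continuity at $[o,e_1]$. You only invoke continuity to select linear solutions of the Cauchy equation, but the paper needs continuity at $[o,e_1]$ in an essential and different place: in the base case $d=2$ of the simplicity induction, where the additional planar $\sln[2]$-covariant valuations $\oEpp,\oEpm$ (which agree with $\oIpp,\oIpm$ on segments through $o$) must be ruled out, and the only way to eliminate them is via the discontinuity at $[o,e_1]$ (\th\ref{cP^2_o not continuous}). Your remark that ``for $n=2$ the stabilizer of the triangle is trivial'' is true but is not what actually obstructs the $n=2$ case or what the hypothesis $n\ge 3$ is buying; the substantive issue is the existence of these extra planar valuations feeding into the induction. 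Without the subtract-and-prove-simple step and the planar rule-out lemma, the argument does not close.
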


Wannerer \cite{Wannerer_1} extended Ludwig's characterization from valuations on $\cK^n_o$ to valuations on $\cK^n$.
In this case four additional operators arise.
Our second main theorem characterizes continuous $\sln$-covariant $\Lp$-Minkowski valuations defined on $\cK^n$.
Again, no assumption on homogeneity is needed.
In our case only two additional operators arise.
For the definitions of $\oMpps$ and $\oMpms$ see \eqref{eq: def oMpps}.
Furthermore, $K_o$ is defined as the convex hull of a convex body $K$ and the origin.

\begin{theorem*}
	Let $n \geq 3$.
	An operator $\oPhi \colon \cK^n \to \cK^n_o$ is
	a continuous $\sln$-covariant $\Lp$-Minkowski valuation,
	if and only if there exist constants $c_1, c_2, c_3, c_4, c_5, c_6 \geq 0$ such that
	\begin{equation*}
		\oPhi K = c_1 \oMpp K \padd c_2 \oMpm K \padd c_3 \oMpps K \padd c_4 \oMpms K \padd c_5 K_o \padd c_6 (-K_o)
	\end{equation*}
	for all $K \in \cK^n$.
\end{theorem*}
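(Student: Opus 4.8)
The plan is to reduce the $\cK^n$-case to the already-announced classification on $\cK^n_o$ (the first main theorem) together with Wannerer's homogeneous classification on $\cK^n$, and to isolate the ``non-centered'' part of $\oPhi$ by carefully exploiting $\sln$-covariance on bodies that do not contain the origin. First I would record the easy direction: each of $\oMpp$, $\oMpm$, $\oMpps$, $\oMpms$, and the operators $K \mapsto K_o$, $K \mapsto -K_o$ is a continuous $\sln$-covariant $\Lp$-Minkowski valuation $\cK^n \to \cK^n_o$ (the valuation property of $K \mapsto K_o$ follows from the fact that $(K \cup L)_o = K_o \cup L_o$ and $(K\cap L)_o = K_o \cap L_o$ when $K\cup L$ is convex, and the moment-body statements are the $\cK^n$-analogues of the computations underlying the first theorem), and $\Lp$-linear combinations of such maps with nonnegative coefficients are again of this type. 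So the whole content is the forward implication.

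For the forward implication, let $\oPhi\colon\cK^n\to\cK^n_o$ be continuous, $\sln$-covariant and a valuation. The first step is to restrict $\oPhi$ to $\cK^n_o$: this restriction satisfies all the hypotheses of the first main theorem, so there are $c_1,c_2,c_3,c_4\ge 0$ with $\oPhi K = c_1\oMpp K \padd c_2\oMpm K \padd c_3 K \padd c_4(-K)$ for every $K\in\cK^n_o$. Subtracting (in the $\Lp$-Minkowski sense, after checking that $\oPhi K$ dominates the right-hand side support-function-wise uniformly, which again uses $\sln$-covariance) reduces us to the case where $\oPhi$ vanishes on $\cK^n_o$; equivalently, $\oPhi$ is supported on the way a body sits relative to the origin. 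The second step is to run the same argument with Wannerer's homogeneous classification: for the cone of bodies on which one can make a homogeneity (equivalently $\gln$-covariance) argument, $\oPhi$ must already be an $\Lp$-combination of the six listed operators; so the task is to show that the extra flexibility gained by dropping homogeneity produces nothing new beyond $K\mapsto K_o$ and $K\mapsto -K_o$, which are precisely the two operators in Wannerer's list that survive after the $\cK^n_o$-part has been subtracted.

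The technical heart, and the step I expect to be the main obstacle, is the analysis of $\oPhi$ on simplices and on bodies meeting a fixed hyperplane, via the functional equations forced by $\sln$-covariance. Following the strategy that was used to remove homogeneity in the $\cK^n_o$ setting (Haberl's approach, adapted in the proof of the first theorem): one decomposes an arbitrary convex body by cut-and-paste into pieces that are simplices or degenerate, uses the valuation property to express $h(\oPhi K,\cdot)^p$ as a sum over such pieces, and then pins down the function $u\mapsto h(\oPhi \Delta, u)^p$ for the standard simplex $\Delta$ (and its translates/$\sln$-images) by solving a Cauchy-type functional equation in several variables, whose continuous solutions are spanned by the six model functions. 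The delicate points will be (i) controlling the behaviour under translations of $\Delta$ off the origin — this is exactly where $K_o$ versus $K$ gets distinguished, and where the count of ``new'' operators drops from Wannerer's four to two — and (ii) handling the boundary/degenerate cases $n=3$ versus $n\ge 4$ uniformly, together with the continuity argument that upgrades the classification from simplices to all of $\cK^n$. Once the value on simplices is identified, continuity and the valuation property propagate it to polytopes and then to all convex bodies, completing the proof.
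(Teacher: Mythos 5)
Your high-level plan — restrict to $\cK^n_o$, subtract the part already classified, then analyze the remainder — is natural, but it runs into an obstruction the paper is specifically structured to avoid, and it also misses where the real work lies. Two concrete gaps:

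First, the proposed $\Lp$-Minkowski subtraction is not licensed. To write $\oPhi K$ as an $\Lp$-sum of the $\cK^n_o$-part plus something else, you would need $\h{\oPhi K}{\cdot}^p$ to dominate $c_1\h{\oMpp K}{\cdot}^p + \cdots + c_4\h{-K}{\cdot}^p$ pointwise, and moreover you would need the difference to again be $p$-th power of a support function — neither is clear a priori. The paper sidesteps this entirely by never subtracting inside $\cK^n_o$: it passes immediately to the map $P \mapsto \h{\oPhi P}{\cdot}^p$ into the function space $\Cp$, where ordinary subtraction is unrestricted, and only at the very end argues (via the sublinearity of the resulting expression) that the coefficients are nonnegative. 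Everything up to that point lives in $\Cp$, not $\cK^n_o$, and that is not a cosmetic choice.

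Second, and more seriously, the classification over polytopes produces \emph{eight} linearly independent $\sln$-covariant operators $\cP^n \to \Cp$ (the four moment-type maps $\oMpp, \oMpm, \oMpps, \oMpms$ plus $\oIpp, \oIpm, \oJpp, \oJpm$), and the whole point of the corollary is to show that $\oJpp$ and $\oJpm$ must have vanishing coefficient once $\oPhi$ takes values in actual convex bodies. Your proposal never mentions these two extra operators, and neither the appeal to Wannerer's homogeneous classification nor the Cauchy-equation heuristic explains why they drop out: the elimination in the paper is a separate argument that plugs specific test polytopes (a triangle $\conv\{e_1,e_2,e_1{+}e_2\}$ and the segment $\widetilde T^1$) into the sublinearity of $\h{\oPhi K}{\cdot}$ and derives opposite inequalities $d_7 \le 0$ and $d_7 \ge 0$. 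Relatedly, citing Wannerer's theorem cannot serve as a lemma here, because it assumes homogeneity, which is exactly the hypothesis being removed; the paper instead rebuilds everything from an almost-simple polytope classification (\th\ref{class sln co almost simple Pn}), a dimension-reduction lemma (\th\ref{sln co almost simple p homogeneous Pn}) where the planar operators $\oEpp, \oFpp$ etc.\ appear as potential counterexamples to be excluded by continuity, and a geometric lemma (\th\ref{oPhi P subseteq lin P Pn}) controlling $\oPhi$ on lower-dimensional polytopes.
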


We remark that a complete classification of all continuous $\sln$-contravariant $\Lp$-Minkowski valuations
was also recently obtained by the author in \cite{Parapatits_1}.

	\section{Background Material}
As a general reference for the material in this section see \cite{Gruber2007, KlainRota1997, Schneider1993}.
For the dimension of the Euclidean space $\R^n$, we will always assume that $n \geq 1$.
The standard basis vectors will be denoted by $e_1,\ldots,e_n$ and the origin by $o$.
We will often write $x = (x_1, \ldots, x_n)^t$ for $x \in \R^n$.
For $x,y \in \R^n$ the scalar product, the induced norm and the orthogonal complement will be denoted by
$\s{x}{y}$, $\|x\|$ and $x^\perp$, respectively.
The linear, affine and convex hull, i.e.\ all linear, affine and convex combinations of a given set,
are denoted by $\lin$, $\aff$ and $\conv$, respectively.
Furthermore we will assume $p > 1$ throughout this article.

Associated with a convex body $K$ is its dimension $\dim K$,
which is equal to the dimension of $\aff K$.
If $K$ is an $m$-dimensional convex body we denote its $m$-dimensional volume by $\vol_m K$.
Most of the time we will not work with valuations defined on $\cK^n$ or $\cK^n_o$,
but with valuations defined on convex polytopes,
i.e.\ convex hulls of finite subsets of $\R^n$.
The set of all convex polytopes is denoted by $\cP^n$
and the subset of all convex polytopes containing the origin by $\cP^n_o$.

Apart from Minkowski addition and $\Lp$-Minkowski addition, there is also a scalar multiplication for convex bodies.
It is defined by $sK = \{sx : x \in K\}$ for all $K \in \cK^n$ and $s \geq 0$.
The sets $\cK^n$, $\cK^n_o$, $\cP^n$ and $\cP^n_o$ are all closed under Minkowski addition and scalar multiplication.
Furthermore, the sets $\cK^n_o$ and $\cP^n_o$ are also closed under $\Lp$-Minkowski addition.
When we talk about continuity, we mean continuity with respect to the Hausdorff metric.
The Hausdorff distance between two convex bodies $K,L$ is defined by
\begin{equation*}
	\delta(K,L) = \min\{ \epsilon > 0 : K + \epsilon B^n \subseteq L, \ L + \epsilon B^n \subseteq K \},
\end{equation*}
where $B^n$ is the Euclidean unit ball in $\R^n$.
The sets $\cK^n$ and $\cK^n_o$ are closed in the Hausdorff topology.

We already mentioned the support function of a convex body in the introduction.
A convex body is uniquely defined by its support function.
On the other hand, a function $h \colon \R^n \to \R$ is the support function of a convex body,
if and only if it is sublinear, i.e.\
\begin{equation*}
	h(u+v) \leq h(u) + h(v) \quad \text{and} \quad h(su) = sh(u)
\end{equation*}
for all $u,v \in \R^n$ and $s > 0$.
The first property is called subadditivity and the second one is $1$-homogeneity.
To see that the definition of $\Lp$-Minkowski addition \eqref{eq: def padd} makes sense one just has to verify that
$\h{K \padd L}{.}$ is a nonnegative sublinear function for all $K,L \in \cK^n_o$.
Note that every sublinear function is convex and therefore continuous.

The map $K \mapsto \h{K}{.}$ is an injective homomorphism from $\cK^n$
to the space of $1$-homogeneous continuous functions on $\R^n$, denoted $\C$, i.e.\
\begin{equation*}
	\h{K+L}{.} = \h{K}{.} + \h{L}{.} \quad \text{and} \quad \h{sK}{.} = s\h{K}{.}
\end{equation*}
for all $K,L \in \cK^n$ and $s \geq 0$.
Analogously the map $K \mapsto \h{K}{.}^p$ is an injective homomorphism from $\cK^n_o$
to the space of $p$-homogeneous continuous functions on $\R^n$, denoted $\Cp$, i.e.\
\begin{equation*}
	\h{K \padd L}{.}^p = \h{K}{.}^p + \h{L}{.}^p \quad \text{and} \quad \h{sK}{.}^p = s^p \h{K}{.}
\end{equation*}
for all $K,L \in \cK^n_o$ and $s \geq 0$.
We can calculate the Hausdorff distance of two convex bodies $K,L$ by $\delta(K,L) = \| \h{K}{.} - \h{L}{.} \|_\infty$,
where $\| . \|_\infty$ denotes the maximum norm on the Euclidean unit sphere in $\R^n$, denoted $S^{n-1}$.
Since support functions are homogeneous, they are determined by their values on $S^{n-1}$.
Therefore, it makes sense to equip $\C$ and $\Cp$ with this norm.
This also makes the two homomorphisms from above continuous.

We have already defined the notion of valuation in the introduction \eqref{eq: def val}.
Note that if $\cQ^n$ equals $\cK^n$,$\cK^n_o$, $\cP$ or $\cP^n_o$ the definition simplifies a little,
because in this case $K \cup L \in \cQ^n$ implies $K \cap L \in \cQ^n$ for all $K,L \in \cQ^n$.
It is easy to see that a map $\oPhi \colon \cK^n \to \cK^n$ is a Minkowski valuation,
if and only if the map $K \mapsto \h{\oPhi K}{.}$ from $\cK^n$ to $\C$ is a valuation.
Similarly a map $\oPhi \colon \cK^n \to \cK^n_o$ is an $\Lp$-Minkowski valuation,
if and only if the map $K \mapsto \h{\oPhi K}{.}^p$ from $\cK^n$ to $\Cp$ is a valuation.
The same also holds for valuations defined on $\cK^n_o$, $\cP$ or $\cP^n_o$.

We now recall some general results on valuations.
The first theorem is due to Volland \cite{Volland1957} (see also \cite{KlainRota1997}).
Short proofs of the other results can be found in \cite{Parapatits_1}.

It is convenient to assume that $A$ contains an identity element denoted $0$
and to define that $\oPhi(\emptyset) = 0$, even though $\emptyset \not \in \cK^n$.
We will do this throughout this article.

\begin{theorem}\th\label{incl excl Pn}
	Let $A$ be an abelian group and let $\oPhi \colon \cP^n \to A$ be a valuation.
	Then $\oPhi$ satisfies the inclusion exclusion principle, i.e.\
	\begin{equation*}
		\oPhi(P_1 \cup \ldots \cup P_m) =
		\sum_{\emptyset \neq S \subseteq \{1,\ldots,m\}} (-1)^{|S|-1} \oPhi \left( \bigcap_{i \in S} P_i \right)
	\end{equation*}
	for all $m \in \N$ and $P_1, \ldots, P_m \in \cP^n$ with $P_1 \cup \ldots \cup P_m \in \cP^n$.
\end{theorem}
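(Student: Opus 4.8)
The plan is to reduce the statement, by repeatedly cutting with hyperplanes, to two essentially trivial special cases. The first ingredient is a one-cut reduction: fix a hyperplane $H$ with closed halfspaces $H^+,H^-$, and note that since $P:=P_1\cup\dots\cup P_m\in\cP^n$, each of the three families $\{P_i\cap H^+\}_{i}$, $\{P_i\cap H^-\}_{i}$ and $\{P_i\cap H\}_{i}$ consists of polytopes and has a polytope union, so the asserted identity makes sense for each of them. I would show that if it holds for these three families, it holds for $\{P_i\}_{i}$. This is a rearrangement of the alternating sum: for every polytope $Q$, applying \eqref{eq: def val} to $Q\cap H^+$ and $Q\cap H^-$ gives $\oPhi(Q)=\oPhi(Q\cap H^+)+\oPhi(Q\cap H^-)-\oPhi(Q\cap H)$ (here one uses that $A$ is a group and $\oPhi(\emptyset)=0$); substituting this for $Q=\bigcap_{i\in S}P_i$ over all $\emptyset\neq S\subseteq\{1,\dots,m\}$, using $\bigcap_{i\in S}(P_i\cap H^+)=\bigl(\bigcap_{i\in S}P_i\bigr)\cap H^+$ together with the three assumed identities, collapses $\sum_{\emptyset\neq S}(-1)^{|S|-1}\oPhi\bigl(\bigcap_{i\in S}P_i\bigr)$ to $\oPhi(P\cap H^+)+\oPhi(P\cap H^-)-\oPhi(P\cap H)=\oPhi(P)$.

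The second ingredient is that the identity is immediate if one member contains all the others, say $P_{i_0}\supseteq P_j$ for every $j$. Then $P_1\cup\dots\cup P_m=P_{i_0}$, and for each $S$ with $i_0\in S$ and $S\neq\{i_0\}$ one has $\bigcap_{i\in S}P_i=\bigcap_{i\in S\setminus\{i_0\}}P_i$, so the term of the alternating sum indexed by $S$ cancels the one indexed by $S\setminus\{i_0\}$; only $S=\{i_0\}$ survives, giving $\oPhi(P_{i_0})$. I will call a family with this property \emph{terminal}.

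The main step iterates the cut. I would fix hyperplanes $H_1,\dots,H_N$ such that each $P_i$ is an intersection of closed halfspaces bounded by, and of hyperplanes equal to, members of $\{H_1,\dots,H_N\}$. For $0\le k\le N$ and $\eta\in\{+,-,0\}^k$ put $D_\eta=\bigcap_{j=1}^k H_j^{\eta_j}$, with $H_j^0:=H_j$ and $D_\eta:=\R^n$ for $k=0$; then $P_i\cap D_\eta$, and its union over $i$, is again a polytope. By downward induction on $k$ I would show the identity holds for every family $\{P_i\cap D_\eta\}_i$: for $k<N$ this is the one-cut reduction applied to $H_{k+1}$, whose three output families have the form $\{P_i\cap D_{\eta'}\}_i$ with $\eta'\in\{+,-,0\}^{k+1}$, and $k=0$ gives the theorem. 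For the base case $k=N$, each defining constraint of $P_i$ is, on the polyhedron $D_\eta$, either redundant or forces membership in the corresponding $H_j$, which is then a supporting hyperplane of $D_\eta$; hence $P_i\cap D_\eta$ is an intersection of faces of $D_\eta$, so it is itself a face of $D_\eta$. Thus $\bigcup_i(P_i\cap D_\eta)=P\cap D_\eta$ is a convex union of faces of $D_\eta$, and an elementary extremality argument shows such a set must be a single face of $D_\eta$; it therefore coincides with one of the $P_i\cap D_\eta$, so $\{P_i\cap D_\eta\}_i$ is terminal and the previous paragraph applies.

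The conceptual content — the one-cut reduction and the terminal case — is painless; the care is needed only in the last step: choosing the $H_j$ so that lower-dimensional $P_i$ are also intersections of the associated halfspaces and hyperplanes, checking that each $P_i\cap D_\eta$ really is a face of $D_\eta$, proving the small lemma that a convex union of faces of a polyhedron is a face, and keeping the cutting order $H_1,H_2,\dots$ fixed so that the downward induction on $k$ remains valid even though each cut creates new facets lying on the $H_j$.
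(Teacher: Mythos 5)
The paper does not prove this statement at all: it is attributed to Volland \cite{Volland1957}, with \cite{KlainRota1997} given as a secondary reference, and the surrounding text only says that short proofs of the \emph{other} preliminary lemmas appear in \cite{Parapatits_1}. So there is no proof in the paper to compare yours against.

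Your argument is correct, and it is a genuinely different route from the standard ones. The classical treatments (Volland, Groemer, Klain--Rota) typically prove the inclusion--exclusion identity by extending $\oPhi$ to the lattice generated by polytopes, e.g.\ via an extension theorem for valuations or via indicator-function algebra, which requires some upfront machinery. You instead give a direct geometric recursion over a hyperplane arrangement: (i) the one-cut reduction, which is just the three-term identity $\oPhi(Q)=\oPhi(Q\cap H^+)+\oPhi(Q\cap H^-)-\oPhi(Q\cap H)$ applied to each $Q=\bigcap_{i\in S}P_i$ and the distributivity $\bigcap_S(P_i\cap H^\pm)=(\bigcap_S P_i)\cap H^\pm$; (ii) the \emph{terminal} case, where the alternating sum telescopes under the involution $S\mapsto S\bigtriangleup\{i_0\}$; and (iii) the base case $k=N$, where each $P_i\cap D_\eta$ is an intersection of faces of $D_\eta$ (since on $D_\eta$ each defining constraint of $P_i$ is either redundant or equivalent to lying in a hyperplane supporting $D_\eta$), and hence a face, so that the convex set $P\cap D_\eta=\bigcup_i(P_i\cap D_\eta)$ is a convex union of faces and therefore a single face, making the family terminal. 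All three ingredients check out; the only places needing care are exactly the ones you flag --- choosing $H_1,\ldots,H_N$ rich enough to cut out the lower-dimensional $P_i$ as intersections of the associated closed halfspaces and hyperplanes, and the small lemma that a convex union of faces of a polyhedron is itself a face (take $x\in\relint U$ and note that the minimal face of $D_\eta$ containing $x$ is sandwiched between $U$ and whichever $P_j\cap D_\eta$ contains $x$). You also correctly use that $A$ is a group (to subtract) and the paper's convention $\oPhi(\emptyset)=0$. This approach trades abstract extension machinery for a longer but elementary and self-contained recursion.
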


The convex hull of $k+1$ affinely independent points is called a $k$-dimensional simplex.
Special simplices are the $n$-dimensional standard simplex $T^n := \conv\{ o, e_1, \ldots, e_n \}$
and $\widetilde T^{n-1} := \conv\{ e_1, \ldots, e_n \}$, which is an $(n-1)$-dimensional simplex.

\begin{lemma}\th\label{valuation determined simplices Pn}
	Let $A$ be an abelian group and let $\oPhi \colon \cP^n \to A$ be a valuation.
	Then $\oPhi$ is determined by its values on $n$-dimensional simplices.
\end{lemma}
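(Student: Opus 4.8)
The plan is to express the value $\oPhi(P)$ of an arbitrary $P \in \cP^n$ in terms of values of $\oPhi$ on $n$-dimensional simplices, in two steps: first reduce from polytopes to simplices of all dimensions, and then reduce from simplices of all dimensions to the $n$-dimensional ones.

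For the first reduction, fix $P \in \cP^n$, put $d = \dim P$, and triangulate $P = S_1 \cup \dots \cup S_m$ with $S_1, \dots, S_m$ being $d$-dimensional simplices that form a simplicial complex (such a triangulation always exists). Then for every $\emptyset \neq T \subseteq \{1, \dots, m\}$ the intersection $\bigcap_{i \in T} S_i$ is a common face of the $S_i$, hence either empty or again a simplex of dimension at most $d \leq n$. By the inclusion--exclusion principle (Theorem~\ref{incl excl Pn}), $\oPhi(P)$ equals the signed sum $\sum_{\emptyset \neq T}(-1)^{|T|-1}\oPhi\big(\bigcap_{i \in T} S_i\big)$, in which each term is $0$ or the value of $\oPhi$ on a simplex of dimension at most $n$. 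So it remains to show that $\oPhi$ on simplices of any dimension $k \leq n$ is determined by the values of $\oPhi$ on $n$-dimensional simplices.

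I would prove this by downward induction on $k$; the case $k = n$ is trivial. Let $S = \conv\{p_0, \dots, p_k\}$ be a $k$-simplex with $k < n$. Choose $z \in \R^n$ outside the direction space $\lin\{p_1 - p_0, \dots, p_k - p_0\}$ of $\aff S$ (possible because $\dim \aff S = k < n$) and set $a = p_0 + z$, $b = p_0 - z$. Then $Q := \conv\{a, b, p_1, \dots, p_k\}$ is a $(k+1)$-simplex whose affine hull is cut by the hyperplane $\aff S$ into the two $(k+1)$-simplices $Q^+ = \conv\{a, p_0, p_1, \dots, p_k\}$ and $Q^- = \conv\{b, p_0, p_1, \dots, p_k\}$, and one checks that $Q^+ \cup Q^- = Q$ and $Q^+ \cap Q^- = S$. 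Hence the valuation property gives $\oPhi(S) = \oPhi(Q^+) + \oPhi(Q^-) - \oPhi(Q)$, and since the three simplices on the right are $(k+1)$-dimensional, the induction hypothesis shows that $\oPhi(S)$ is determined by the values of $\oPhi$ on $n$-dimensional simplices. (For $k = 0$ this is the same argument with $Q = \conv\{a, b\}$ a segment.) Combining the two reductions yields the claim.

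The routine points are the existence of a simplicial triangulation and the elementary verification of the dissection identities for $Q^\pm$. The step I expect to require the most care is the dimension-raising construction in the induction: one must realize \emph{every} lower-dimensional simplex as the intersection of the two halves of a genuinely higher-dimensional simplex, since this is precisely what allows the induction to climb from dimension $k$ up to $n$.
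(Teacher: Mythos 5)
Your proof is correct; the two constructions you use — a simplicial triangulation of $P$ combined with the inclusion–exclusion principle (Theorem~\ref{incl excl Pn}) to reduce to simplices of dimension at most $n$, and the split-simplex dissection $Q = Q^+ \cup Q^-$, $S = Q^+ \cap Q^-$ with $a = p_0 + z$, $b = p_0 - z$ to raise the dimension one step at a time by downward induction — are both sound, and I have checked the affine-independence and dissection identities. The paper does not contain its own proof of this lemma but defers to \cite{Parapatits_1}; the argument you give is the standard short proof and is essentially what is intended there.
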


\begin{lemma}\th\label{incl excl Pno}
	Let $A$ be an abelian group and let $\oPhi \colon \cP^n_o \to A$ be a valuation.
	Then $\oPhi$ satisfies the inclusion exclusion principle, i.e.\
	\begin{equation*}
		\oPhi(P_1 \cup \ldots \cup P_m) =
		\sum_{\emptyset \neq S \subseteq \{1,\ldots,m\}} (-1)^{|S|-1} \oPhi \left( \bigcap_{i \in S} P_i \right)
	\end{equation*}
	for all $m \in \N$ and $P_1, \ldots, P_m \in \cP^n_o$ with $P_1 \cup \ldots \cup P_m \in \cP^n_o$.
\end{lemma}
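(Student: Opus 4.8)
I would first record the elementary observation that renders the right-hand side meaningful: if $P_1,\ldots,P_m\in\cP^n_o$, then $o\in\bigcap_{i\in S}P_i$ for every nonempty $S\subseteq\{1,\ldots,m\}$, so each such intersection again lies in $\cP^n_o$ (empty intersections being covered by the convention $\oPhi(\emptyset)=0$). Hence, once $P_1\cup\cdots\cup P_m\in\cP^n_o$ is assumed, every polytope occurring in the claimed identity belongs to $\cP^n_o$, and only the union carries a genuine hypothesis; in particular, intersecting two members of $\cP^n_o$ is already harmless. The plan is then to run the proof of \thref{incl excl Pn} almost verbatim, the one obstacle being that $\cP^n_o$ is not closed under all the set operations used there; this I would repair by carrying out every subdivision through the origin.

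The argument proceeds by induction on the dimension $n$, as in \thref{incl excl Pn}, the cases $n=0,1$ being handled directly. In the inductive step one argues, as in the proof of \thref{incl excl Pn}, by cutting $P_1,\ldots,P_m$ (and $P:=P_1\cup\cdots\cup P_m$) with a hyperplane $H$ in general position with respect to their facets and recursing on $n$; here one insists that $H$ pass through the origin, say $H=\{x:\s{x}{u}=0\}$ and $H^{\pm}=\{x:\pm\s{x}{u}\ge 0\}$ for a suitable $u$. Since $\s{o}{u}=0$, each of $H$, $H^{+}$, $H^{-}$ contains $o$, so each of $P_i\cap H^{+}$, $P_i\cap H^{-}$, $P_i\cap H$ again lies in $\cP^n_o$ — the last, after the identification $H\cong\R^{n-1}$, in $\cP^{n-1}_o$ — while $P\cap H^{+}$ and $P\cap H^{-}$ are convex. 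Applying \eqref{eq: def val} to the pair $P\cap H^{+},P\cap H^{-}$ (whose union is $P$ and whose intersection is $P\cap H$), the dimension-$(n-1)$ case of the statement to $\{P_i\cap H\}_i$, and the inductive hypothesis throughout, one expresses $\oPhi(P\cap H^{+})$, $\oPhi(P\cap H^{-})$ and $\oPhi(P\cap H)$ as alternating sums over subsets $S$; a final use of \eqref{eq: def val} recombines $\oPhi\bigl((\bigcap_{i\in S}P_i)\cap H^{+}\bigr)$, $\oPhi\bigl((\bigcap_{i\in S}P_i)\cap H^{-}\bigr)$ and $\oPhi\bigl((\bigcap_{i\in S}P_i)\cap H\bigr)$ into $\oPhi(\bigcap_{i\in S}P_i)$, and collecting terms yields the formula for $\oPhi(P)$.

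The genuine difficulty is the one already faced in \thref{incl excl Pn}: a partial union $P_1\cup\cdots\cup P_k$ need not be convex, so the polytopes cannot simply be peeled off one at a time, and the hyperplane-cutting device is what circumvents this. The only point special to the present setting is the need to keep every auxiliary polytope inside $\cP^n_o$, which is exactly what forcing the cutting hyperplanes through $o$ accomplishes; the remaining bookkeeping — checking that the halfspace and slice contributions reassemble into precisely the alternating sum over subsets $S$ — is routine.
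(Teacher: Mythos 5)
The paper does not actually supply a proof of this lemma; it refers to \cite{Parapatits_1} for short proofs of Lemmas~2.2--2.5, so there is no in-paper argument to compare against. Judged on its own, your proposal has the right key observation but a gap in the induction.

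What is right: intersections of members of $\cP^n_o$ stay in $\cP^n_o$, and if a cutting hyperplane $H$ is required to pass through $o$, then $P_i\cap H^{\pm}$ and $P_i\cap H$ all remain in $\cP^n_o$. This is exactly the constraint that must be imposed when adapting any hyperplane-cutting proof from $\cP^n$ to $\cP^n_o$, and you identify it correctly.

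What does not close is the inductive scheme. You announce induction on the dimension $n$, but after cutting $P$ with a single hyperplane $H$ through $o$, the two pieces $P\cap H^{+}$ and $P\cap H^{-}$ are in general still $n$-dimensional, and so are the polytopes $P_i\cap H^{\pm}$ covering them. To express $\oPhi(P\cap H^{\pm})$ as an alternating sum over subsets $S$ you would need the inclusion-exclusion identity for an $m$-fold cover in dimension $n$ --- i.e.\ exactly the statement being proved. Only the slice $P\cap H$ drops to dimension $n-1$, so the dimension induction handles one of the three terms, not all of them. The phrase ``the inductive hypothesis throughout'' papers over this: there is no stated parameter that decreases for the two halfspace pieces. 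A genuine proof (of either \th\ref{incl excl Pn} or the present lemma) needs a secondary induction on some combinatorial complexity of the arrangement (e.g.\ the number of facet hyperplanes of the $P_i$ that cut through the interior of $P$), chosen so that each cut strictly decreases it; and in the $\cP^n_o$ setting one must additionally check that this reduction can always be achieved by cuts constrained to pass through $o$, which is not automatic --- the facet hyperplanes of the $P_i$ typically do not contain $o$, so one cannot simply ``cut along a facet'' as in the unconstrained case. Until you specify that secondary parameter and verify it decreases under origin-passing cuts, the argument is circular at the halfspace step.
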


\begin{lemma}\th\label{valuation determined simplices}
	Let $A$ be an abelian group and let $\oPhi \colon \cP^n_o \to A$ be a valuation.
	Then $\oPhi$ is determined by its values on $n$-dimensional simplices with one vertex at the origin
	and its value on $\{o\}$.
\end{lemma}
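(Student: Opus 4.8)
The plan is to mimic the proof of Lemma \ref{valuation determined simplices Pn}, but carefully tracking which simplices contain the origin. First I would invoke Lemma \ref{incl excl Pno}, the inclusion-exclusion principle on $\cP^n_o$: if a polytope $P \in \cP^n_o$ admits a dissection $P = P_1 \cup \dots \cup P_m$ into polytopes $P_i \in \cP^n_o$ such that all intersections $\bigcap_{i \in S} P_i$ lie in $\cP^n_o$, then $\oPhi P$ is determined by the values of $\oPhi$ on these intersections. So the key combinatorial task is: given an arbitrary $P \in \cP^n_o$, produce a dissection of $P$ into $n$-dimensional simplices, all of which contain the origin, and such that all faces of the dissection that arise as intersections also contain the origin. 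The natural candidate is the \emph{cone dissection from the origin}: triangulate the boundary $\partial P$ into $(n-1)$-simplices $F_1, \dots, F_k$ (a standard fact for polytopes, and one may refine any triangulation so that it is a genuine simplicial complex), and set $S_j := \conv(\{o\} \cup F_j)$.

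There are two cases. If $o \in \interior P$, then each $S_j$ is an $n$-dimensional simplex with one vertex at the origin, and any intersection of several $S_j$'s is a cone from $o$ over a common face of the boundary triangulation, hence again contains $o$; so inclusion-exclusion expresses $\oPhi P$ in terms of $\oPhi$ on $n$-simplices with a vertex at $o$ together with, in the degenerate intersections, lower-dimensional cones from $o$ — and these lower-dimensional objects are themselves handled by downward induction on dimension (a $d$-dimensional $P$ with $o$ in its relative interior dissects into $d$-simplices with a vertex at $o$), the base case $d = 0$ being precisely the value on $\{o\}$. If $o \in \partial P$ or $o \notin P$, the cone dissection still works but one must be slightly more careful: faces $F_j$ of $\partial P$ that are "visible" in the wrong way may produce $S_j$ of dimension $< n$, or $o$ may fail to lie in $P$ at all. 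The cleanest fix is to first reduce to the case $o \in P$ by the following trick: embed $P$ in a large simplex $Q \ni o$ with $o$ in its interior; write $Q = P \cup (\overline{Q \setminus P})$ appropriately dissected so that inclusion-exclusion relates $\oPhi P$ to $\oPhi Q$ and $\oPhi$ on pieces of $Q$ that are "simpler" — but this requires care to keep everything in $\cP^n_o$.

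A more robust route, and the one I would ultimately pursue, is induction on the number of facets of $P$, as in the standard proof that a valuation on $\cP^n$ is determined by its values on simplices. If $P$ is already a simplex with $o$ as a vertex we are done. If $P$ is a simplex with $o$ in its relative interior or on a lower-dimensional face, dissect from $o$ as above and induct on dimension. Otherwise $P$ has a vertex $v$ that is not a simplex apex, or has more than $n+1$ facets: pick a hyperplane $H$ through $o$ (or, if $o$ is a vertex, through $o$ and suitable other vertices) cutting $P$ into $P^+ = P \cap H^+$ and $P^- = P \cap H^-$ with $P = P^+ \cup P^-$, $P^+ \cap P^- = P \cap H \in \cP^{n}_o$, each of $P^\pm$ having fewer facets or strictly smaller "complexity" than $P$; then $\oPhi P = \oPhi P^+ + \oPhi P^- - \oPhi(P \cap H)$ reduces to the inductive hypothesis, the cut being chosen so that $o$ lies in all three pieces. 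I expect the main obstacle to be precisely this bookkeeping: choosing the cutting hyperplanes to always pass through $o$ (so that $P^\pm, P \cap H \in \cP^n_o$) while still strictly decreasing a well-founded complexity measure, and handling the cases where $o$ sits on the boundary of $P$ rather than the interior — the interior case is routine, but the boundary and exterior-vertex configurations need the extra dimensional induction to terminate at the value on $\{o\}$.
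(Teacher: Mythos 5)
The paper itself does not prove this lemma; it defers to \cite{Parapatits_1}. But on its own terms your proposal has a genuine gap, and it is the same gap in both routes you describe. After coning a full-dimensional $P \in \cP^n_o$ from the origin (or after your induction on facets), the inclusion-exclusion produces, besides $n$-simplices with a vertex at $o$, lower-dimensional cones from $o$ -- that is, $d$-simplices with a vertex at $o$ for $d < n$. Your ``downward induction on dimension'' claims these are handled by dissecting them from $o$ in their affine hull, with base case $\{o\}$. But a $d$-simplex with a vertex at $o$ \emph{is} already a cone from $o$ over a $(d-1)$-simplex; coning it from $o$ returns it unchanged, so this step produces nothing new and the recursion stalls at dimension $d$. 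Since the lemma only assumes knowledge of $\oPhi$ on $n$-simplices with a vertex at $o$ and on $\{o\}$, you still have to explain how the values on $d$-simplices with a vertex at $o$, $1 \le d \le n-1$, are forced -- and neither the cone dissection nor the facet-count induction does this.

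The missing ingredient is a dissection of an $n$-simplex with apex $o$ \emph{into two such simplices} by a hyperplane through $o$. Given an $(n-1)$-simplex $S = \conv\{o, w_1, \ldots, w_{n-1}\}$ with $o$ as a vertex, pick $u \notin \lin\{w_1,\ldots,w_{n-1}\}$, set $v_1 = w_1 + u$, $v_2 = w_1 - u$, and let $T = \conv\{o, v_1, v_2, w_2, \ldots, w_{n-1}\}$, an $n$-simplex with apex $o$. The hyperplane $H$ through $o$ spanned by $u^{\perp}$-type directions can be chosen so that $T \cap H^{\pm} = \conv\{o, v_1, w_1, w_2, \ldots, w_{n-1}\}$ and $\conv\{o, w_1, v_2, w_2, \ldots, w_{n-1}\}$ are again $n$-simplices with apex $o$, while $T \cap H = S$. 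The valuation identity $\oPhi(T) + \oPhi(S) = \oPhi(T \cap H^{+}) + \oPhi(T \cap H^{-})$ then determines $\oPhi(S)$ from values on $n$-simplices with a vertex at $o$. Iterating this inside the affine hull of $S$ determines $\oPhi$ on $k$-simplices with a vertex at $o$ for every $k$ down to $0$; only then does your cone-dissection argument close. You should make this step explicit rather than subsuming it in ``downward induction on dimension.''
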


A valuation is called simple, if $\oPhi K = 0$ for all $K \in \cQ^n$ with $\dim K < n$.

\begin{lemma}\th\label{simple determined Pno}
	Let $A$ be an abelian group and let $\oPhi \colon \cP^n \to A$ be a simple valuation.
	Then $\oPhi$ is determined by its values on $\cP^n_o$.
\end{lemma}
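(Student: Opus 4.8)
The plan is to reduce the statement, by means of \thref{valuation determined simplices Pn}, to recovering $\oPhi S$ from the values of $\oPhi$ on $\cP^n_o$ for an arbitrary $n$-dimensional simplex $S$, and to accomplish this through a ``beneath-beyond'' dissection of $\conv(\{o\} \cup S)$ into cells each having a vertex at the origin. By \thref{valuation determined simplices Pn} it indeed suffices to express $\oPhi S$ through values of $\oPhi$ on $\cP^n_o$ for every $n$-dimensional simplex $S$. If $o \in S$ there is nothing to prove, since then $S \in \cP^n_o$; so suppose $o \notin S$. Representing $S$ as the intersection of the closed halfspaces bounded by the affine hulls of its $n+1$ facets, the hypothesis $o \notin S$ means that $o$ violates at least one of these defining inequalities, so the set $V$ of facets $F$ of $S$ for which $o$ lies strictly on the exterior side of $\aff F$ is nonempty. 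For each such $F$ we have $o \notin \aff F$, so $\conv(\{o\} \cup F)$ is an $n$-dimensional simplex with a vertex at the origin; in particular $\conv(\{o\} \cup F) \in \cP^n_o$. Moreover $P := \conv(\{o\} \cup S)$ is an $n$-dimensional polytope containing the origin, so $P \in \cP^n_o \subseteq \cP^n$.

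Every point of $P$ lies on a segment $[o,s]$ with $s \in S$, and the part of such a segment up to its first intersection with $S$ is contained in $\conv(\{o\} \cup F)$, where $F \in V$ is the facet through which the segment enters $S$. Together with $S \subseteq P$ this gives the dissection
\begin{equation*}
	P = S \cup \bigcup_{F \in V} \conv(\{o\} \cup F),
\end{equation*}
in which any two distinct cells meet only inside the affine hull of a facet of $S$, hence in dimension less than $n$. Applying the inclusion-exclusion principle \thref{incl excl Pn} to this union of polytopes and using simplicity of $\oPhi$ to annihilate every summand involving an intersection of two or more cells, we obtain
\begin{equation*}
	\oPhi S = \oPhi P - \sum_{F \in V} \oPhi\bigl( \conv(\{o\} \cup F) \bigr).
\end{equation*}
Since $P$ and all of the simplices $\conv(\{o\} \cup F)$, $F \in V$, lie in $\cP^n_o$, the right-hand side is determined by the restriction of $\oPhi$ to $\cP^n_o$, and the proof is complete.

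The only step I expect to require genuine care is the dissection above together with the claim that distinct cells overlap only in dimension less than $n$; this is the classical geometry behind the beneath-beyond construction of polytopes, but the configuration in which $o$ lies beyond several facets of $S$---so that some vertices of $S$ may fall into the interior of $P$---deserves a careful check, which the segment argument indicated above (covering $P$ by the segments joining $o$ to $S$) is designed to handle.
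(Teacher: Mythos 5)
Your proof is correct, and the route you take — reduce to $n$-dimensional simplices via \th\ref{valuation determined simplices Pn}, then dissect $\conv(\{o\}\cup S)$ into $S$ together with the cones $\conv(\{o\}\cup F)$ over the facets $F$ of $S$ visible from $o$, kill the overlaps by simplicity via \th\ref{incl excl Pn}, and solve for $\oPhi S$ — is the standard beneath--beyond argument and the one the paper's cited reference uses. The geometric core you flag as delicate is indeed the only nontrivial point and it does hold: every $x\in P\setminus S$ lies on a segment $[o,y]$ where $y$ is the first boundary point of $S$ on the ray from $o$ through $x$, and at least one facet through $y$ must be visible from $o$ (otherwise one could move from $y$ slightly toward $o$ and stay in $S$, contradicting minimality); and for two distinct visible facets $F_1,F_2$, a point $z\neq o$ in $\conv(\{o\}\cup F_1)\cap\conv(\{o\}\cup F_2)$ forces the ray from $o$ through $z$ to cross both $\aff F_1$ and $\aff F_2$ at the unique first entry into $S$, so $z\in\conv(\{o\}\cup(F_1\cap F_2))$, which has dimension at most $n-1$. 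Two small points worth tightening: ``the facet through which the segment enters $S$'' should be ``a facet,'' since the entry point may lie on a lower-dimensional face; and it is worth remarking that at most $n$ facets of a simplex are visible from any exterior point (the barycentric coordinates of $o$ cannot all be negative), so all nonempty intersections occurring in the inclusion--exclusion sum are genuine polytopes, with the remaining empty intersections handled by the convention $\oPhi(\emptyset)=0$.
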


Finally we recall Cauchy's functional equation
\begin{equation}\label{eq: cauchy}
	f(a+b) = f(a) + f(b) \quad \forall a,b \in \R.
\end{equation}
Of course every linear function satisfies \eqref{eq: cauchy}.
It is a well known fact that a nonlinear function $f \colon \R \to \R$ can only satisfy \eqref{eq: cauchy},
if it has a dense graph in $\R^2$ or equivalently if every open subset of $\R$ has a dense image under $f$.
A function $f \colon (0,+\infty) \to \R$ satisfying \eqref{eq: cauchy} for all $a,b \in (0,+\infty)$
can be extended to an odd function on $\R$, which then satisfies Cauchy's functional equation for all $a,b \in \R$.
Therefore such a function $f$ is either linear or it has the property
that every open subset of $(0,+\infty)$ has a dense image under $f$.

	\section{$\sln$-Covariance}
Let $\cQ^n$ be $\cK^n$, $\cP^n$, $\cK^n_o$ or $\cP^n_o$.
A map $\oPhi \colon \cQ^n \to \cK^n_o$ is called $\sln$-covariant, if it satisfies
\begin{equation*}
	\oPhi( \phi K ) = \phi \oPhi K
\end{equation*}
for all $K \in \cQ^n$ and $\phi \in \sln$.
A map $\oPhi \colon \cQ^n \to \Cp$ is called $\sln$-covariant, if it satisfies
\begin{equation*}
	\oPhi( \phi K ) = \oPhi(K) \circ \phi^t
\end{equation*}
for all $K \in \cQ^n$ and $\phi \in \sln$.
Since
\begin{equation*}
	\h{\phi \oPhi K}{u} = \h{\oPhi K}{\phi^t u}
\end{equation*}
holds for all $K \in \cQ^n$, $u \in \R^n$ and $\phi \in \sln$,
we see that a map $\oPhi \colon \cQ^n \to \cK^n_o$ is $\sln$-covariant,
if and only if $K \mapsto \h{\oPhi K}{.}^p$ from $\cQ^n$ to $\Cp$ is $\sln$-covariant.
We also make analogous definitions and remarks for the general linear group, $\gln$.

We define an $\sln$-covariant $\Lp$-Minkowski valuation $\oIpp \colon \cK^n \to \cK^n_o$ by
\begin{equation*}
	\h{\oIpp K}{.}^p = \max_{x \in K} \s{x}{.}_+^p
\end{equation*}
for all $K \in \cK^n$, where $\s{x}{.}_+ := \max(0,\s{x}{.})$ denotes the positive part of $\s{x}{.}$.
Note that $\oIpp K = K_o$ for all $K \in \cK^n$.
Similarly we define an $\sln$-covariant $\Lp$-Minkowski valuation $\oIpm \colon \cK^n \to \cK^n_o$ by
\begin{equation*}
	\h{\oIpm K}{.}^p = \max_{x \in K} \s{x}{.}_-^p
\end{equation*}
for all $K \in \cK^n$, where $\s{x}{.}_- := \max(0,-\s{x}{.})$ denotes the negative part of $\s{x}{.}$.
Note that $\oIpm K = -K_o$ for all $K \in \cK^n$.
There are two other very similar $\sln$-covariant valuations,
namely $\oJpp \colon \cK^n \to \Cp$ and $\oJpm \colon \cK^n \to \Cp$.
These are defined by
\begin{equation*}
	\oJpp K = \min_{x \in K} \s{x}{.}_+^p
\end{equation*}
for all $K \in \cK^n$ and by
\begin{equation*}
	\oJpm K = \min_{x \in K} \s{x}{.}_-^p
\end{equation*}
for all $K \in \cK^n$, respectively.
Note that $\oJpp K$ and $\oJpm K$ are not necessarily support functions of convex bodies.
Also note that $\oJpp$ and $\oJpm$ vanish on $\cK^n_o$.
Finally we remark that $\oIpp$, $\oIpm$, $\oJpp$ and $\oJpm$ are also continuous and $\gln$-covariant.

Another family of $\sln$-covariant $\Lp$-Minkowski valuations are the $\Lp$-moment body operators.
We define $\oMpp \colon \cK^n \to \cK^n_o$ by
\begin{equation}\label{eq: def oMpp}
	\h{\oMpp K}{.}^p = \int_K \s{x}{.}_+^p dx
\end{equation}
for all $K \in \cK^n$.
Similarly we define $\oMpm \colon \cK^n \to \cK^n_o$.
Note that $\oMpp$ and $\oMpm$ are simple.
A variant of $\oMpp$ is $\oMpps \colon \cK^n \to \cK^n_o$ defined by
\begin{equation}\label{eq: def oMpps}
	\h{\oMpps K}{.}^p = \int_{K_o \setminus K} \s{x}{.}_+^p dx
\end{equation}
for all $K \in \cK^n$.
Similarly we define $\oMpms \colon \cK^n \to \cK^n_o$.
Note that $\oMpps$ and $\oMpms$ vanish on $\cK^n_o$.
Finally we remark that $\oMpp$, $\oMpm$, $\oMpps$ and $\oMpms$ are also continuous and $(n+p)/p$-homogeneous.

Recalling the embedding of $\cK^n_o$ into $\Cp$,
we have defined eight linearly independent $\sln$-covariant valuations with values in $\Cp$.
Notice that evaluating the minus-version of one of these operators at $K$ is the same as evaluating the plus-version at $-K$,
for example $\oIpm K = \oIpp (-K)$.

For later use we need to calculate some constants.
Let $i \in \{1, \ldots, n\}$.
We start with
\begin{equation}\label{eq: constant oIpp [o,e_i]}
	\h{\oIpp [o,e_i]}{x}^p = \max_{y \in [o,e_i]} \s{y}{x}_+^p = \s{e_i}{x}_+^p = (x_i)_+^p
\end{equation}
for all $x \in \R^n$.

Next we calculate
\begin{equation}\label{eq: constant oMpp T^n e_i}
\begin{split}
	\h{\oMpp T^n}{e_i}^p
	&= \int_{T^n} \s{x}{e_i}_+^p dx \\
	&= \int_0^1 x_i^p \vol_{n-1} \left( (1-x_i) T^{n-1} \right)  dx_i \\
	&= \vol_{n-1} \left( T^{n-1} \right) \int_0^1 x_i^p (1-x_i)^{n-1}  dx_i \\
	&= \frac 1 {(n-1)!} B(p+1,n) \\
	&= \frac {\Gamma(p+1) \Gamma(n)} {(n-1)! \ \Gamma(p+1+n)} \\
	&= \frac {\Gamma(p+1)} {\Gamma(p+1+n)} ,
\end{split}
\end{equation}
where $B$ and $\Gamma$ denote the Beta function and the Gamma function, respectively.
Obviously we have
\begin{equation}\label{eq: constant oMpp T^n -e_i}
	\h{\oMpp T^n}{-e_i}^p = 0 .
\end{equation}

Now we will calculate some constants which will be used in the classification of valuations on $\cP^n$.
Let $i \in \{1, \ldots, n\}$.
We start with
\begin{equation}\label{eq: constant oIpp [e_i, 2e_i]}
	\h{\oIpp [e_i, 2e_i]}{x}^p = \max_{y \in [e_i, 2e_i]} \s{y}{x}_+^p = \s{2e_i}{x}_+^p = 2^p (x_i)_+^p
\end{equation}
for all $x \in \R^n$.
Analogously we see that
\begin{equation}\label{eq: constant oJpp([e_i, 2e_i])}
	\oJpp([e_i, 2e_i])(x) = (x_i)_+^p .
\end{equation}

Next we calculate
\begin{equation}\label{eq: constant oMpps widetilde T^(n-1) e_i}
	  \h{\oMpps \widetilde T^{n-1}}{e_i}^p
	= \h{\oMpp T^n}{e_i}^p
	= \frac {\Gamma(p+1)} {\Gamma(p+1+n)}
\end{equation}
and
\begin{equation}\label{eq: constant oMpps widetilde T^(n-1) -e_i}
	\h{\oMpps \widetilde T^{n-1}}{-e_i}^p = 0 .
\end{equation}

Now let $n \geq 2$.
We set $K = \conv \{ e_1, e_2, e_1+e_2\}$ and calculate:
\begin{equation}\label{eq: constant oIpp oJpp conv (e_1, e_2, e_1+e_2)}
\begin{split}
	\h{\oIpp K}{e_1}^p = 1 ,\quad \h{\oIpp K}{e_2}^p = 1 ,\quad \h{\oIpp K}{e_1+e_2}^p = 2^p , \\
	\oJpp(K)(e_1) = 0 ,\quad \oJpp(K)(e_2) = 0 ,\quad \oJpp(K)(e_1+e_2) = 1 .
\end{split}
\end{equation}
The values of $\h{\oIpm K}{.}^p$ and $\oJpm K$ in these directions are all equal to $0$.
Finally we compute:
\begin{equation}\label{eq: constant oIpp oJpp widetilde T^1}
\begin{split}
	\h{\oIpp \widetilde T^1}{e_1}^p = 1 ,\quad \h{\oIpp \widetilde T^1}{e_1+e_2}^p = 1 ,\quad \h{\oIpp \widetilde T^1}{2e_1+e_2}^p &= 2^p , \\
	\oJpp(\widetilde T^1)(e_1) = 0 ,\quad \oJpp(\widetilde T^1)(e_1+e_2) = 1 ,\quad \oJpp(\widetilde T^1)(2e_1+e_2) = 1 .
\end{split}
\end{equation}
The values of $\h{\oIpm \widetilde T^1}{.}^p$ and $\oJpm \widetilde T^1$ in these directions are all equal to $0$.

In $\R^2$ there are other $\sln$-covariant valuations, which do not show up in $\R^n$ for $n \geq 3$.
We will only cover those additional operators in $\R^2$ that we need for the proof of the $n \geq 3$ case.
We define $\oEpp \colon \cK^2 \to \cK^2_o$ by
\begin{equation*}
	\h{\oEpp K}{.}^p = \frac 1 2 \sum_{\substack{u \in S^1 \\ \htiny{K}{u} = 0}} \max_{x \in F(K,u)} \s{x}{.}_+^p
\end{equation*}
for all $K \in \cK^n$, where $F(K,u) := \{x \in K : \s{x}{u} = \h{K}{u}\}$.
Note that this sum has at most two nonzero summands.
Clearly $\oEpp$ is $\gln$-covariant.
The valuation property can be shown by a case-by-case analysis.
Similarly we define $\oEpm \colon \cK^2 \to \cK^2_o$.
It is easy to see that $\oEpp$ and $\oEpm$ coincide with $\oIpp$ and $\oIpm$, respectively,
for $1$-dimensional convex bodies $K$ with $o \in \aff K$.

Finally $\oFpp \colon \cK^2 \to \Cp[2]$ is defined by
\begin{equation*}
	\oFpp K = \frac 1 2 \sum_{\substack{u \in S^1 \\ \htiny{K}{u} = 0}} \min_{x \in F(K,u)} \s{x}{.}_+^p
\end{equation*}
for all $K \in \cK^n$.
Similarly we define $\oFpm \colon \cK^2 \to \Cp[2]$.
Note that $\oFpp K$ and $\oFpm K$ are not necessarily support functions of convex bodies.
Also note that $\oFpp$ and $\oFpm$ vanish on $\cK^n_o$.
It is easy to see that $\oFpp$ and $\oFpm$ coincide with $\oJpp$ and $\oJpm$, respectively,
for $1$-dimensional convex bodies $K$ with $o \in \aff K$.
Notice again how one can get the minus-version of an operator by inserting $-K$ in the plus-version.

We will now collect some properties of these new valuations in $\R^2$ and calculate some constants.
Let $i \in \{1,2\}$.
We start with calculating
\begin{equation*}
	\h{\oEpp T^2}{e_i}^p = \frac 1 2 \left( \h{[o,e_1]}{e_i}^p + \h{[o,e_2]}{e_i}^p \right) = \frac 1 2.
\end{equation*}
Similarly we have
\begin{equation*}
	\h{\oEpp T^2}{-e_i}^p = 0 .
\end{equation*}
Therefore we get:
\begin{equation}\label{eq: constants oIpp - oEpp T^2 +-e_i}
\begin{split}
	\h{\oIpp T^2}{e_i}^p - \h{\oEpp T^2}{e_i}^p = 1 - \frac 1 2 = \frac 1 2 , \\
	\h{\oIpp T^2}{-e_i}^p - \h{\oEpp T^2}{-e_i}^p = 0 - 0 = 0.
\end{split}
\end{equation}

Next, we will look at the continuity of the above operators.
Clearly
\begin{equation*}
	\lim_{\varepsilon \to 0} [-\varepsilon e_1, e_1] + [-\varepsilon e_2, \varepsilon e_2] = [o, e_1] .
\end{equation*}
Since
\begin{equation*}
	\lim_{\varepsilon \to 0} \h{\oEpp \left( [-\varepsilon e_1, e_1] + [-\varepsilon e_2, \varepsilon e_2] \right)}{e_1}^p
	= \lim_{\varepsilon \to 0} \h{\{o\}}{e_1}^p
	= 0
\end{equation*}
and since
\begin{equation*}
	\h{\oEpp [o, e_1]}{e_1}^p = 1 ,
\end{equation*}
we see that
\begin{equation*}
	P \mapsto \h{\oEpp P}{.}^p
\end{equation*}
from $\cP^n_o$ to $\Cp$ is not continuous at $[o,e_1]$.
Similarly we prove the following lemma.

\begin{lemma}\th\label{cP^2_o not continuous}
	The only linear combination of
	\begin{equation*}
		P \mapsto \h{\oEpp P}{.}^p \quad \text{and} \quad P \mapsto \h{\oEpm P}{.}^p
	\end{equation*}
	which is continuous at $[o,e_1]$ is the trivial one.
\end{lemma}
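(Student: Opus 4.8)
The plan is to reuse the sequence $K_\varepsilon := [-\varepsilon e_1, e_1] + [-\varepsilon e_2, \varepsilon e_2]$ from the computation just above. It converges to $[o,e_1]$, and for every $\varepsilon > 0$ it is the rectangle $[-\varepsilon, 1] \times [-\varepsilon, \varepsilon]$, so the origin lies in its interior. I would use this single sequence, evaluated in the two directions $e_1$ and $-e_1$, to show that the two difference operators ``jump'' in complementary directions at $[o,e_1]$, which immediately forces both coefficients of any continuous linear combination to vanish.

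First I would record the value at the limit: since $[o,e_1]$ is a $1$-dimensional convex body with $o \in \aff[o,e_1]$, the operators $\oEpp$ and $\oEpm$ coincide there with $\oIpp$ and $\oIpm$, respectively, so both maps $P \mapsto \h{\oIpp P}{.}^p - \h{\oEpp P}{.}^p$ and $P \mapsto \h{\oIpm P}{.}^p - \h{\oEpm P}{.}^p$ take the value $0$ at $[o,e_1]$.

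Next I would evaluate the two differences along the sequence. For $\varepsilon > 0$ the origin is interior to $K_\varepsilon$, hence $\h{K_\varepsilon}{u} > 0$ for all $u \in S^1$, so the index sets in the definitions of $\oEpp K_\varepsilon$ and $\oEpm K_\varepsilon$ are empty and both operators give $0$; moreover $(K_\varepsilon)_o = K_\varepsilon$, so $\oIpp K_\varepsilon = K_\varepsilon$ and $\oIpm K_\varepsilon = -K_\varepsilon$. Using $\h{K_\varepsilon}{e_1} = 1$, $\h{K_\varepsilon}{-e_1} = \varepsilon$ and $\h{-K_\varepsilon}{\pm e_1} = \h{K_\varepsilon}{\mp e_1}$, this gives $\h{\oIpp K_\varepsilon}{e_1}^p - \h{\oEpp K_\varepsilon}{e_1}^p = 1$ but $\h{\oIpm K_\varepsilon}{e_1}^p - \h{\oEpm K_\varepsilon}{e_1}^p = \varepsilon^p$, and $\h{\oIpm K_\varepsilon}{-e_1}^p - \h{\oEpm K_\varepsilon}{-e_1}^p = 1$ but $\h{\oIpp K_\varepsilon}{-e_1}^p - \h{\oEpp K_\varepsilon}{-e_1}^p = \varepsilon^p$.

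Finally, suppose $\alpha ( \h{\oIpp P}{.}^p - \h{\oEpp P}{.}^p ) + \beta ( \h{\oIpm P}{.}^p - \h{\oEpm P}{.}^p )$ is continuous at $[o,e_1]$. Continuity in the maximum norm on $S^1$ forces convergence of the values at $e_1$ and at $-e_1$ as $\varepsilon \to 0$; comparing with the value $0$ at $[o,e_1]$ yields $\alpha \cdot 1 + \beta \cdot 0 = 0$ from the direction $e_1$ and $\alpha \cdot 0 + \beta \cdot 1 = 0$ from the direction $-e_1$, hence $\alpha = \beta = 0$. I expect no genuine obstacle here; the only points needing a little care are verifying that $o$ is really an interior point of each $K_\varepsilon$ (so that $\oEpp$ and $\oEpm$ vanish on it) and invoking the already-noted coincidence of $\oEpp,\oEpm$ with $\oIpp,\oIpm$ on the limiting segment.
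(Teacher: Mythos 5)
Your proposal is correct and takes essentially the same approach as the paper. The paper computes the jump of $\h{\oIpp \cdot}{e_1}^p - \h{\oEpp \cdot}{e_1}^p$ along $K_\varepsilon = [-\varepsilon e_1,e_1] + [-\varepsilon e_2,\varepsilon e_2]$ and then says ``similarly we prove the following lemma''; you have simply carried out the ``similarly'' step explicitly, using the same sequence and evaluating at the two directions $\pm e_1$ to obtain the diagonal system that forces both coefficients to vanish. All the intermediate computations (interiority of $o$, vanishing of $\oEpp,\oEpm$ on $K_\varepsilon$, the values $1$ and $\varepsilon^p$, and the coincidence of $\oEpp,\oEpm$ with $\oIpp,\oIpm$ on $[o,e_1]$) are accurate.
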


Finally we need to calculate some constants related to valuations on $\cP^2$.
Let $i \in \{1,2\}$.
We start with
\begin{equation*}
	\h{\oEpp \widetilde T^1}{e_i}^p = \frac 1 2 \left( \s{e_1}{e_i}_+^p + \s{e_2}{e_i}_+^p \right) = \frac 1 2.
\end{equation*}
Similarly we have
\begin{equation*}
	\h{\oEpp \widetilde T^1}{-e_i}^p = 0 .
\end{equation*}
Therefore we get:
\begin{equation}\label{eq: constant oIpp - oEpp widetilde T^1 +-e_i}
\begin{split}
	\h{\oIpp \widetilde T^1}{e_i}^p - \h{\oEpp \widetilde T^1}{e_i}^p &= 1 - \frac 1 2 = \frac 1 2 , \\
	\h{\oIpp \widetilde T^1}{-e_i}^p - \h{\oEpp \widetilde T^1}{-e_i}^p &= 0 - 0 = 0.
\end{split}
\end{equation}
Analogously we calculate:
\begin{equation}
\begin{split}\label{eq: constant oIpp - oEpp + oJpp - oFpp T^2 +-e_i}
	\h{\oIpp T^2}{e_i}^p - \h{\oEpp T^2}{e_i}^p + \oJpp(T^2)(e_i) - \oFpp(T^2)(e_i) &= \frac 1 2 , \\
	\h{\oIpp T^2}{-e_i}^p - \h{\oEpp T^2}{-e_i}^p + \oJpp(T^2)(-e_i) - \oFpp(T^2)(-e_i) &= 0 .
\end{split}
\end{equation}
Similar to the proof of \th\ref{cP^2_o not continuous} we see that every linear combination of the four operators
\begin{equation*}
\begin{split}
	P &\mapsto \h{\oEpp P}{.}^p , \\
	P &\mapsto \h{\oEpm P}{.}^p , \\
	P &\mapsto \h{\oEpp P}{.}^p + \oFpp P
\end{split}
\end{equation*}
and
\begin{equation*}
	P \mapsto \h{\oEpm P}{.}^p + \oFpm P
\end{equation*}
which is continuous at $[o,e_1]$ is actually a linear combination of $\oFpp$ and $\oFpm$.
Clearly
\begin{equation*}
	\lim_{\varepsilon \to 0} \varepsilon e_2 + [o,e_1] = [o, e_1] .
\end{equation*}
Since
\begin{equation*}
	  \lim_{\varepsilon \to 0} \oFpp(\varepsilon e_2 + [o,e_1])(e_1)
	= \lim_{\varepsilon \to 0} \frac 1 2
	= \frac 1 2
\end{equation*}
and since
\begin{equation*}
	\oFpp([o, e_1])(e_1) = 0 ,
\end{equation*}
we see that $\oFpp$ is not continuous at $[o,e_1]$.
Similarly we see that the only linear combination of the operators $\oFpp$ and $\oFpm$
which is continuous at $[o,e_1]$ is the trivial one.
Therefore we arrive at the following lemma.

\begin{lemma}\th\label{cP^2 not continuous}
	The only linear combination of
	\begin{equation*}
	\begin{split}
		P &\mapsto \h{\oEpp P}{.}^p , \\
		P &\mapsto \h{\oEpm P}{.}^p , \\
		P &\mapsto \h{\oEpp P}{.}^p + \oFpp P
	\end{split}
	\end{equation*}
	and
	\begin{equation*}
		P \mapsto \h{\oEpm P}{.}^p + \oFpm P
	\end{equation*}
	which is continuous at $[o,e_1]$ is the trivial one.
\end{lemma}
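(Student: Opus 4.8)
Denote the four operators in the statement, in that order, by $\oPhi_1,\oPhi_2,\oPhi_3,\oPhi_4$, and suppose $\oPhi:=a_1\oPhi_1+a_2\oPhi_2+a_3\oPhi_3+a_4\oPhi_4$ is continuous at $[o,e_1]$. Since $\oPhi_3=\oPhi_1+(\oJpp-\oFpp)$ and $\oPhi_4=\oPhi_2+(\oJpm-\oFpm)$ we may rewrite
\begin{equation*}
	\oPhi=(a_1+a_3)\,\oPhi_1+(a_2+a_4)\,\oPhi_2+a_3\,(\oJpp-\oFpp)+a_4\,(\oJpm-\oFpm).
\end{equation*}
The plan is to first show $a_1+a_3=a_2+a_4=0$ by testing $\oPhi$ along a sequence on which $\oJpp,\oJpm,\oFpp$ and $\oFpm$ all vanish identically, so that $\oPhi$ reduces to a linear combination of $\oJpp-\oFpp$ and $\oJpm-\oFpm$; and then to show $a_3=a_4=0$ by the same kind of argument used above for the single operator $\oJpp-\oFpp$.

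For the first step I would probe along $P_\varepsilon:=[-\varepsilon e_1,e_1]+[-\varepsilon e_2,\varepsilon e_2]$, which tends to $[o,e_1]$ and which, for $\varepsilon>0$, is a two-dimensional polytope containing the origin in its interior. Hence $P_\varepsilon$ has no supporting line through the origin, so $\oEpp P_\varepsilon=\oEpm P_\varepsilon=\oFpp P_\varepsilon=\oFpm P_\varepsilon=0$, while $o\in P_\varepsilon$ gives $\oJpp P_\varepsilon=\oJpm P_\varepsilon=0$; moreover $\oIpp P_\varepsilon=P_\varepsilon$ and $\oIpm P_\varepsilon=-P_\varepsilon$. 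Thus $\oPhi(P_\varepsilon)=(a_1+a_3)\,\h{P_\varepsilon}{.}^p+(a_2+a_4)\,\h{-P_\varepsilon}{.}^p$, whereas $\oPhi([o,e_1])=0$, since $\oJpp,\oJpm,\oFpp,\oFpm$ vanish on $[o,e_1]\in\cK^2_o$ and $\oPhi_1,\oPhi_2$ vanish on one-dimensional bodies whose affine hull contains the origin. Since $\h{P_\varepsilon}{e_1}^p=1$ and $\h{-P_\varepsilon}{e_1}^p=\varepsilon^p\to0$, the continuity of $\oPhi$ at $[o,e_1]$ evaluated in direction $e_1$ forces $a_1+a_3=0$; evaluated in direction $-e_1$, where $\h{-P_\varepsilon}{-e_1}^p=1$ and $\h{P_\varepsilon}{-e_1}^p=\varepsilon^p\to0$, it forces $a_2+a_4=0$ (this is exactly the reasoning in the proof of \th\ref{cP^2_o not continuous}). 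Hence $\oPhi=a_3\,(\oJpp-\oFpp)+a_4\,(\oJpm-\oFpm)$.

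For the second step, recall that $\oJpp-\oFpp$ and $\oJpm-\oFpm$ vanish on $\cK^2_o$, so $\oPhi([o,e_1])=0$. Probing along $Q_\varepsilon:=\varepsilon e_2+[o,e_1]\to[o,e_1]$: for $\varepsilon>0$ the segment $Q_\varepsilon$ has exactly two supporting lines through the origin, meeting it in the single points $\varepsilon e_2$ and $e_1+\varepsilon e_2$; hence, as computed above, $(\oJpp-\oFpp)(Q_\varepsilon)(e_1)=0-\tfrac12=-\tfrac12$, while $(\oJpm-\oFpm)(Q_\varepsilon)(e_1)=0$ because both $\varepsilon e_2$ and $e_1+\varepsilon e_2$ pair non-negatively with $e_1$. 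So $\oPhi(Q_\varepsilon)(e_1)=-\tfrac12 a_3$ for all $\varepsilon>0$, and continuity at $[o,e_1]$ forces $a_3=0$. Replacing $Q_\varepsilon$ by $-\varepsilon e_2+[o,e_1]$ and the direction $e_1$ by $-e_1$, the same computation with the plus- and minus-operators interchanged gives $a_4=0$. Together with the first step this yields $a_1=a_2=a_3=a_4=0$, which proves the lemma.

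I expect the only real work to lie in the bookkeeping: verifying that $P_\varepsilon$ indeed has no supporting line through the origin, so that $\oEpp,\oEpm,\oFpp,\oFpm$ all vanish on it and the four unknowns enter $\oPhi(P_\varepsilon)$ only through $a_1+a_3$ and $a_2+a_4$; and checking that for each of the two one-parameter families of the second step exactly one of $\oJpp-\oFpp$, $\oJpm-\oFpm$ has the jump $-\tfrac12$ at $[o,e_1]$ in the chosen direction while the other is identically zero there. Both reduce to writing down the supporting lines of the relevant segments and evaluating support functions on them, so I do not anticipate a genuine obstacle beyond this.
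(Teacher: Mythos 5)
Your proposal is correct and takes essentially the same route as the paper: the paper also reduces to $\oJpp-\oFpp$ and $\oJpm-\oFpm$ via the probe $[-\varepsilon e_1,e_1]+[-\varepsilon e_2,\varepsilon e_2]$ (as in \th\ref{cP^2_o not continuous}), and then detects the remaining coefficients with the translated segment $\varepsilon e_2+[o,e_1]$.
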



We complete this section with a simple lemma (cf. \cite{Parapatits_1}).

\begin{lemma}\th\label{sln covariance gln}
	Let $\cQ^n$ be either $\cP^n_o$ or $\cP^n$ and let $\oPhi \colon \cQ^n \to \Cp$ be $\sln$-covariant.
	Furthermore let $\phi \in \gln$ with $\det \phi > 0$.
	Then
	\begin{equation*}
		\oPhi(\phi P) = \det(\phi)^{-\frac p n} \oPhi \left( \det(\phi)^{\frac 1 n} P \right) \circ \phi^t
	\end{equation*}
	for all $P \in \cQ^n$.
\end{lemma}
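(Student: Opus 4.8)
The idea is to reduce the claim about an arbitrary $\phi \in \gln$ with $\det \phi > 0$ to the defining $\sln$-covariance by absorbing the determinant into a scalar dilation. First I would write $\lambda := \det(\phi)^{1/n} > 0$ and factor $\phi = \psi \circ (\lambda \cdot \mathrm{id})$, where $\psi := \lambda^{-1}\phi$. Then $\det \psi = \lambda^{-n}\det\phi = 1$, so $\psi \in \sln$. Since $\cQ^n$ (being $\cP^n_o$ or $\cP^n$) is closed under scalar multiplication, $\lambda P \in \cQ^n$ for all $P \in \cQ^n$, and $\phi P = \psi(\lambda P)$ lies in $\cQ^n$ as well, so both sides of the asserted identity make sense.

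Next I would apply $\sln$-covariance of $\oPhi$ to the body $\lambda P$ and the map $\psi$: this gives
\begin{equation*}
	\oPhi(\phi P) = \oPhi\bigl(\psi(\lambda P)\bigr) = \oPhi(\lambda P) \circ \psi^t .
\end{equation*}
Now $\psi^t = \lambda^{-1}\phi^t$, and since elements of $\Cp$ are $p$-homogeneous functions on $\R^n$, precomposition with the scalar $\lambda^{-1}$ pulls out a factor $\lambda^{-p} = \det(\phi)^{-p/n}$; that is, for $f \in \Cp$ we have $f \circ \psi^t = f\bigl(\lambda^{-1}\phi^t(\cdot)\bigr) = \lambda^{-p}\,\bigl(f \circ \phi^t\bigr)$. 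Applying this with $f = \oPhi(\lambda P)$ and recalling $\lambda = \det(\phi)^{1/n}$ yields
\begin{equation*}
	\oPhi(\phi P) = \det(\phi)^{-\frac p n}\, \oPhi\bigl(\det(\phi)^{\frac 1 n} P\bigr) \circ \phi^t ,
\end{equation*}
which is exactly the claim.

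There is no real obstacle here; the only points requiring a word of care are that $\psi$ genuinely lands in $\sln$ (a one-line determinant computation) and that the $p$-homogeneity used is precisely the defining property of $\Cp$ recalled in the Background section, so that scaling the argument of a function in $\Cp$ by a positive constant $s$ multiplies its value by $s^p$. Everything else is bookkeeping with the chain rule for composition and the fact that $(\lambda\cdot\mathrm{id})^t = \lambda\cdot\mathrm{id}$ commutes with $\phi^t$.
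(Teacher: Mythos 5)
Your proof is correct and matches the paper's approach exactly: the paper's one-line proof also factors $\phi$ via $\det(\phi)^{-1/n}\phi \in \sln$ and then invokes $\sln$-covariance together with the $p$-homogeneity of functions in $\Cp$. You have simply spelled out the bookkeeping that the paper leaves implicit.
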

\begin{proof}
	Since $\det(\phi)^{-\frac 1 n} \phi \in \sln$,
	this follows directly from the $\sln$-covariance of $\oPhi$ and the $p$-homogeneity of the functions in $\Cp$.
\end{proof}

	\section{Main Results on $\cK^n_o$}
The goal of this section is the classification
of all continuous $\sln$-covariant $\Lp$-Minkowski valuations on $\cK^n_o$.
It will be convenient to first prove a slightly more general theorem about valuations from $\cP^n_o$ to $\Cp$.
We begin by proving a classification with the additional assumption of simplicity.
The next theorem is an adaptation of a corresponding theorem concerning $\sln$-contravariant valuations from \cite{Parapatits_1}.

\begin{theorem}\th\label{class sln co simple}
	Let $n \geq 3$ and let $\oPhi \colon \cP^n_o \rightarrow \Cp$ be a simple $\sln$-covariant valuation.
	Assume further that for every $y \in \R^n$ there exists a bounded open interval $I_y \subseteq (0,+\infty)$
	such that $\{\oPhi(sT^n)(y): s \in I_y\}$ is not dense in $\R$.
	Then there exist constants $c_1, c_2 \in \R$ such that
	\begin{equation}
		\oPhi P = c_1 \h{\oMpp P}{.}^p + c_2 \h{\oMpm P}{.}^p
	\end{equation}
	for all $P \in \cP^n_o$.
\end{theorem}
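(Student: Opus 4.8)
The plan is to reduce the statement to the single one–parameter family $F(s):=\oPhi(sT^n)\in\Cp$, $s>0$, and to analyze it through dissections of $T^n$ combined with Cauchy's equation \eqref{eq: cauchy}. First, since $\oPhi$ is a simple valuation on $\cP^n_o$, \th\ref{valuation determined simplices} shows that it is already determined by its values on $n$-dimensional simplices with a vertex at the origin (the value on $\{o\}$ is $0$ by simplicity). Each such simplex is $\phi T^n$ for some $\phi\in\gln$ with $\det\phi>0$, so by \th\ref{sln covariance gln} the operator $\oPhi$ is determined by $\{F(s):s>0\}$. Hence it suffices to prove that $F(s)=s^{n+p}F(1)$ for all $s>0$ and that $F(1)$ is a linear combination of $\h{\oMpp T^n}{.}^p$ and $\h{\oMpm T^n}{.}^p$.

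For the homogeneity I would use the ``edge cut'': given $1\le i<j\le n$ and $\lambda\in(0,1)$, the hyperplane through $o$, the point $v:=\lambda e_i+(1-\lambda)e_j$ and the remaining $n-2$ vertices $e_k$ splits $T^n$ into two $n$-simplices $\phi_1 T^n,\phi_2 T^n$ meeting in an $(n-1)$-dimensional simplex, where $\phi_1,\phi_2\in\gln$ fix every $e_k$, $\det\phi_1=1-\lambda$, $\det\phi_2=\lambda$, and $\phi_1^t y$, $\phi_2^t y$ agree with $y$ except that the $j$-th, respectively $i$-th, coordinate becomes $\lambda y_i+(1-\lambda)y_j$. (Here $n\ge3$ is used, to provide a spectator coordinate.) Simplicity and the valuation property give $\oPhi(T^n)=\oPhi(\phi_1 T^n)+\oPhi(\phi_2 T^n)$; scaling $T^n$ to $sT^n$ and rewriting the summands via \th\ref{sln covariance gln} yields a functional equation for $F$. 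Evaluated at a direction $y$ with $y_i=y_j$ (in particular at each $\pm e_k$) the maps $\phi_\ell^t$ fix $y$, and after the substitution $\sigma=s^n$, $\alpha=\lambda\sigma$, $\beta=(1-\lambda)\sigma$ the equation becomes $G(\alpha+\beta)=G(\alpha)+G(\beta)$ for all $\alpha,\beta>0$, where $G(\sigma):=\sigma^{-p/n}\oPhi(\sigma^{1/n}T^n)(y)$. Since $\oPhi(sT^n)(y)=s^pG(s^n)$ with $s\mapsto s^p$ continuous and positive, the non-density assumption on $\{\oPhi(sT^n)(y):s\in I_y\}$ rules out the pathological solutions of \eqref{eq: cauchy} (for those, every open subset of $(0,\infty)$ has dense image under $G$, which would make the set above dense too). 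Hence $G$ is linear and $\oPhi(sT^n)(y)=s^{n+p}\oPhi(T^n)(y)$ for all $s>0$ whenever $y$ lies in one of the hyperplanes $\{y_i=y_j\}$.

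The step to arbitrary directions is the second use of $n\ge3$: for $y$ with pairwise distinct coordinates let $y_i$ and $y_j$ be its smallest and largest coordinate and $y_k$ the second smallest; then $i,j,k$ are distinct and $y_i<y_k<y_j$. Running the edge cut between $e_i$ and $e_j$ with the unique $\lambda$ for which $\lambda y_i+(1-\lambda)y_j=y_k$ makes both $\phi_1^t y$ and $\phi_2^t y$ have two equal coordinates, so the homogeneity already proved applies to the two summands and the functional equation collapses to $\oPhi(sT^n)(y)=s^{n+p}\oPhi(T^n)(y)$. Thus $F(s)=s^{n+p}F(1)$ unconditionally, which by \th\ref{valuation determined simplices} and \th\ref{sln covariance gln} extends to $\oPhi(sP)=s^{n+p}\oPhi(P)$ for every $P\in\cP^n_o$; in particular $\oPhi(\phi P)=\det(\phi)\,\oPhi(P)\circ\phi^t$ whenever $\det\phi>0$.

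Finally, to identify the homogeneous operator, note that by \eqref{eq: constant oMpp T^n e_i} and \eqref{eq: constant oMpp T^n -e_i} one has $\h{\oMpp T^n}{e_1}^p=\h{\oMpm T^n}{-e_1}^p=\Gamma(p+1)/\Gamma(p+1+n)\neq0$ while $\h{\oMpp T^n}{-e_1}^p=\h{\oMpm T^n}{e_1}^p=0$, so there are $c_1,c_2\in\R$ for which $\oPsi P:=\oPhi P-c_1\h{\oMpp P}{.}^p-c_2\h{\oMpm P}{.}^p$ satisfies $\oPsi(T^n)(\pm e_1)=0$. Then $\oPsi$ is again a homogeneous simple $\sln$-covariant valuation on $\cP^n_o$; since $\oPsi(T^n)$ is invariant under even permutations of the coordinates, and these act transitively on $e_1,\dots,e_n$, we get $\oPsi(T^n)(\pm e_k)=0$ for all $k$, and it remains to show $\oPsi(T^n)=0$ --- which forces $\oPsi=0$ by $\gln$-covariance and \th\ref{valuation determined simplices}, i.e. $\oPhi P=c_1\h{\oMpp P}{.}^p+c_2\h{\oMpm P}{.}^p$. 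Iterating the midpoint edge cut ($\lambda=\tfrac12$) and passing to the limit, using the continuity of $\oPsi(T^n)\in\Cp$, yields for every coordinate pair --- and, via $\gln$-covariance, in every linear frame --- an averaging identity expressing $\oPsi(T^n)(y)$ as an integral of $\oPsi(T^n)$ over directions obtained by merging two coordinates of $y$; together with the vanishing at the $\pm e_k$ and the permutation invariance this forces $\oPsi(T^n)=0$. (Alternatively, once homogeneity is available one may appeal to Ludwig's classification \cite{Ludwig2005} of homogeneous $\sln$-covariant $\Lp$-Minkowski valuations, adapted to polytopes.) I expect this last step --- resolving the functional equation for the homogeneous operator --- to be the main obstacle; by contrast the homogeneity reduction is comparatively soft once the edge-cut/Cauchy mechanism and the $n\ge3$ argument are in place.
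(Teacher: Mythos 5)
Your steps 1--4 are essentially the same as the paper's Steps 1--2 (functional equation via the hyperplane cut, Cauchy's equation for the homogeneity), reorganized so that full $(n+p)$-homogeneity is established first before any identification of constants. That reorganization is fine and the details check out: for a fixed pair $(i,j)$ the cut gives homogeneity on $\{y_i=y_j\}$, varying $(i,j)$ covers all $y$ with a repeated coordinate, and the ``$y_i<y_k<y_j$'' trick correctly extends this to $y$ with pairwise distinct coordinates (after merging, $\phi_1^ty$ has equal entries in positions $j,k$ and $\phi_2^ty$ in positions $i,k$, both covered by the previous case).

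The gap is in your final step, and you flag it yourself. After subtracting off $c_1,c_2$ so that $\oPsi(T^n)(\pm e_k)=0$, you need to propagate this to $\oPsi(T^n)\equiv 0$, but the ``iterated midpoint cut / averaging'' argument does not connect to your base case. Iterating $\lambda=\tfrac12$ in the collapsed identity $\oPsi(T^n)(x)=\lambda\oPsi(T^n)(\phi_\lambda^tx)+(1-\lambda)\oPsi(T^n)(\psi_\lambda^tx)$ does give, in the limit, an integral of $\oPsi(T^n)(t,t,x_3,\ldots,x_n)$ over $t$ between $x_1$ and $x_2$; but this only pushes you toward directions with more coinciding coordinates, never toward the directions $\pm e_k$ where you actually know vanishing. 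Merging alone is not a terminating reduction: for $y=(y_1,y_2,0,\ldots,0)$ with $y_1>y_2>0$, say, every merge either creates a new nonzero entry or leaves two nonzero entries of the same sign, and you never reach a coordinate axis, nor do you have vanishing on the diagonal $(1,\ldots,1)$ to anchor the limit. The paper instead runs the identification as an induction on the number of nonzero coordinates of $x$: when $x_1,x_2$ have opposite signs it chooses $\lambda=x_1/(x_1-x_2)$ so that $\phi_\lambda^tx$ and $\psi_\lambda^tx$ each lose a nonzero coordinate; when they have the same sign it chooses $\lambda=1-x_2/x_1$ and substitutes $x\mapsto\phi_\lambda^{-t}x$ into the functional equation so that the left-hand side and the known right-hand summand both have fewer nonzero entries, after which the remaining term can be solved for. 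Your merging lemma alone cannot replace this second (same-sign) case; you would need to incorporate the zero-producing and reparameterization steps, or else genuinely carry out the appeal to Ludwig's homogeneous classification (which would itself need to be re-proved for $\Cp$-valued valuations on $\cP^n_o$ rather than taken off the shelf). As it stands, the identification step is not proved.
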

\begin{proof}
	Using the $\sln$-covariance of $\oPhi$ and \th\ref{valuation determined simplices} it is enough to prove
	\begin{equation}\label{class sln co simple - simplices}
		\oPhi( s T^n ) = c_1 \h{\oMpp( s T^n )}{.}^p + c_2 \h{\oMpm( s T^n )}{.}^p
	\end{equation}
	for $s > 0$.

	\noindent\textbf{1. Functional Equation: }
	Let $\lambda \in (0,1)$ and denote by $H_\lambda$ the hyperplane through $o$ with normal vector
	$\lambda e_1-(1-\lambda)e_2$.
	Since $\oPhi$ is a valuation we get
	\begin{equation*}
		\oPhi(sT^n) + \oPhi(sT^n \cap H_\lambda) = \oPhi(sT^n \cap H_\lambda^+) + \oPhi(sT^n \cap H_\lambda^-) ,
	\end{equation*}
	where $H_\lambda^+$ and $H_\lambda^-$ are the two halfspaces bounded by $H_\lambda$.
	Because $\oPhi$ is assumed to be simple, we get
	\begin{equation}\label{class sln co simple - val property two}
		\oPhi(sT^n) = \oPhi(sT^n \cap H_\lambda^+) + \oPhi(sT^n \cap H_\lambda^-) .
	\end{equation}
	Define $\phi_\lambda \in \gln$ by
		$$\phi_\lambda e_1 = e_1 ,\quad
		  \phi_\lambda e_2 = (1-\lambda)e_1 + \lambda e_2 ,\quad
		  \phi_\lambda e_k = e_k \quad \text{for } 3 \leq k \leq n$$
	and $\psi_\lambda \in \gln$ by
		$$\psi_\lambda e_1 = (1-\lambda)e_1 + \lambda e_2 ,\quad
		  \psi_\lambda e_2 = e_2 ,\quad
		  \psi_\lambda e_k = e_k \quad \text{for } 3 \leq k \leq n .$$
	Note that
	\begin{equation}\label{class sln co simple - det}
		\det(\phi_\lambda) = \lambda \quad \text{and} \quad \det(\psi_\lambda) = 1-\lambda .
	\end{equation}
	Since
		$$T^n \cap H_\lambda^+ = \phi_\lambda T^n \quad \text{and} \quad
		  T^n \cap H_\lambda^- = \psi_\lambda T^n ,$$
	Equation \eqref{class sln co simple - val property two} becomes
	\begin{equation*}
		\oPhi(sT^n) = \oPhi(s \phi_\lambda T^n) + \oPhi(s \psi_\lambda T^n) .
	\end{equation*}
	Using \th\ref{sln covariance gln} and \eqref{class sln co simple - det} we can rewrite the last equation as
	\begin{equation}\label{class sln co simple - functional equation}
		\oPhi(sT^n)(x) =
		\lambda^{-\frac p n} \oPhi\left( \lambda^{\frac 1 n} sT^n \right)(\phi_\lambda^t x)
		+ (1-\lambda)^{-\frac p n} \oPhi\left( (1-\lambda)^{\frac 1 n} sT^n \right)(\psi_\lambda^t x)
	\end{equation}
	for all $x \in \R^n$.

	\noindent\textbf{2. Homogeneity: }
	For $y \in \{e_1,e_2\}^\perp$ \eqref{class sln co simple - functional equation} becomes
	\begin{equation*}
		\oPhi(sT^n)(y) =
		\lambda^{-\frac p n} \oPhi\left( \lambda^{\frac 1 n} sT^n \right)(y) + (1-\lambda)^{-\frac p n} \oPhi\left( (1-\lambda)^{\frac 1 n} sT^n \right)(y) .
	\end{equation*}
	Replace $s$ with $s^{\frac 1 n}$ in the above equation and define $g(s) = \oPhi\left( s^{\frac 1 n} T^n \right)(y)$.
	Then we have
	\begin{equation*}
		g(s) = \lambda^{-\frac p n} g(\lambda s) + (1-\lambda)^{-\frac p n} g((1-\lambda) s) .
	\end{equation*}
	Let $a,b > 0$.
	We set $s = a+b$ and $\lambda = \frac {a} {a+b}$ to get
		$$ g(a+b) = \left( \frac {a} {a+b} \right)^{-\frac p n} g(a) + \left( \frac {b} {a+b} \right)^{-\frac p n} g(b) $$
	and hence
		$$ (a+b)^{-\frac p n}g(a+b) = a^{-\frac p n} g(a) + b^{-\frac p n} g(b) .$$
	We see that $s \mapsto s^{-\frac p n} g(s)$ solves Cauchy's functional equation for $s > 0$.
	By assumption there is a bounded open interval $I_y$ such that $g(I_y)$ is not dense in $\R$.
	It follows that $s \mapsto s^{-\frac p n} g(s)$ is linear.
	This implies $s^{-\frac p n} g(s) = s g(1)$ and hence $g(s) = s^{1+\frac p n} g(1)$.
	The definition of $g$ yields
	\begin{equation*}
		\oPhi(s T^n)(y) = g(s^n) = s^{n+p} g(1) = s^{n+p} \oPhi(T^n)(y) .
	\end{equation*}
	Since $n \geq 3$ and since we can do the above calculation for any two standard basis vectors, we obtain in particular
	\begin{equation}\label{class sln co simple - homogeneity}
		\oPhi(s T^n)(\pm e_i) = s^{n+p} \oPhi(T^n)(\pm e_i) \quad \text{for } i = 1,\ldots,n .
	\end{equation}

	\noindent\textbf{3. Constants: }
	Let $i \in \{1,\ldots,n\}$.
	Since $n \geq 3$, we can find a permutation of the coordinates $\phi \in \sln$ such that $\phi^t e_1 = e_i$.
	It follows that
	\begin{equation}\label{class sln co simple - values on standard basis}
		\oPhi(T^n)(e_i) = \oPhi(T^n)(\phi^t e_1) = \oPhi( \phi T^n)(e_1) = \oPhi(T^n)(e_1) .
	\end{equation}
	Similarly we get $\oPhi(T^n)(-e_i) = \oPhi(T^n)(-e_1)$.
	Set
	\begin{equation}\label{class sln co simple - constants}
		c_1 = \frac {\Gamma(p+1+n)}{\Gamma(p+1)} \oPhi(T^n)(e_1) \quad \text{and} \quad
		c_2 = \frac {\Gamma(p+1+n)}{\Gamma(p+1)} \oPhi(T^n)(-e_1) .
	\end{equation}
	
	\noindent\textbf{4. Induction: }
	We are now going to show by induction on the number $m$ of coordinates of $x$ not equal to zero that
	\begin{equation}\label{class sln co simple - induction}
		\oPhi( s T^n )(x) = c_1 \h{\oMpp( sT^n )}{x}^p + c_2 \h{\oMpm( s T^n)}{x}^p
	\end{equation}
	for $s > 0$ and for all $x \in \R^n$.
	Note that since $P \mapsto c_1 \h{\oMpp(P)}{.}^p + c_2 \h{\oMpm(P)}{.}^p$ satisfies the assumptions of the theorem it also satisfies
	\eqref{class sln co simple - functional equation} and \eqref{class sln co simple - homogeneity}.
	
	The case $m = 0$ is trivial.
	The case $m = 1$ is also easy to verify with \eqref{class sln co simple - homogeneity},
	\eqref{class sln co simple - values on standard basis}, \eqref{class sln co simple - constants}, \eqref{eq: constant oMpp T^n e_i} and \eqref{eq: constant oMpp T^n -e_i}.
	Now, let $m \geq 2$.
	Without loss of generality assume that $x_1, x_2 \neq 0$ and $|x_1| \geq |x_2|$.
	Since functions in $\Cp$ are continuous, we can further assume that $|x_1| > |x_2|$.

	First consider the case that $x_1$ and $x_2$ have different signs.
	Set $\lambda = \frac {x_1} {x_1 - x_2} \in (0,1)$ and calculate
	\begin{align*}
		   \phi_\lambda^t x
		&= x_1 e_1 + x_1 (1-\lambda) e_2 + x_2 \lambda e_2 + x_3 e_3 + \ldots + x_n e_n \\
		&= x_1 e_1 + x_3 e_3 + \ldots + x_n e_n .
	\end{align*}
	Similarly we have
	\begin{equation*}
		\psi_\lambda^t x = x_2 e_2 + x_3 e_3 + \ldots + x_n e_n .
	\end{equation*}
	Using \eqref{class sln co simple - functional equation} and the induction hypotheses gives the desired result.
	
	Now consider the case that $x_1,x_2$ have the same sign.
	Set $\lambda = 1- \frac {x_2} {x_1} \in (0,1)$ and calculate
	\begin{align*}
		&\phantom{={}}\ \phi_\lambda^t (x_1 e_1 + x_3 e_3 + \ldots + x_n e_n) \\
		&= x_1 e_1 + x_1 (1-\lambda) e_2 + x_3 e_3 + \ldots + x_n e_n \\
		&= x_1 e_1 + x_2 e_2 + x_3 e_3 + \ldots + x_n e_n \\
		&= x
	\end{align*}
	or equivalently
	\begin{equation*}
		\phi_\lambda^{-t} x = x_1 e_1 + x_3 e_3 + \ldots + x_n e_n .
	\end{equation*}
	Similarly we calculate
	\begin{equation*}
		\psi_\lambda^t \phi_\lambda^{-t} x = x_1 (1-\lambda) e_1 + x_3 e_3 + \ldots + x_n e_n .
	\end{equation*}
	Using \eqref{class sln co simple - functional equation} with $x$ replaced by $\phi_\lambda^{-t} x$ and using the induction hypotheses gives the desired result.

	This completes the induction and proves \eqref{class sln co simple - induction} or equivalently \eqref{class sln co simple - simplices}.
\end{proof}

\begin{remark}
	In the second step of the preceeding proof we made critical use of the assumption that
	the dimension $n$ is greater or equal than $3$.
	This is the main obstacle for establishing a similar result for $n = 2$.
	Ludwig's results \cite{Ludwig2005}, however, also hold for $n = 2$.
\end{remark}

We now use \th\ref{class sln co simple} to rule out the existence of certain valuations.
This will be needed in the induction step in the proof of \th\ref{class sln co}.

\begin{lemma}\th\label{sln co simple p homogeneous}
	Let $n \geq 2$.
	If $\oPhi \colon \cP^n_o \rightarrow \Cp$ is a simple $\gln$-covariant valuation
	which is continuous at the line segment $[o,e_1]$, then $\oPhi = 0$.
\end{lemma}
\begin{proof}
	First note that the $\gln$-covariance implies $p$-homogeneity.
	For $n \geq 3$ the assertion follows directly from \th\ref{class sln co simple},
	since $P \mapsto \h{\oMpp P}{.}^p$ and $P \mapsto \h{\oMpm P}{.}^p$ are both $(n+p)$-homogeneous.
	
	Only the case $n = 2$ remains.
	Using the $\gln$-covariance and \th\ref{valuation determined simplices}
	we see that $\oPhi$ is determined by its values on $T^2$.	
	The $\gln$-covariance and the simplicity of $\oPhi$ imply
	\begin{equation*}
		\oPhi(T^2)(x) = \oPhi( T^2 )(\phi_\lambda^t x) + \oPhi( T^2 )(\psi_\lambda^t x) .
	\end{equation*}
	Similar to the steps 3 and 4 in the proof of \th\ref{class sln co simple}, we see that
	$\oPhi$ is determined by the two values $\oPhi(T^2)(\pm e_1)$.
	
	Now notice that $\oEpp$ takes the same values as $\oIpp$
	for all $P \in \cP^2_o$ with $\dim P \leq 1$.
	Therefore
	\begin{equation*}
		P \mapsto \h{\oIpp P}{.}^p - \h{\oEpp P}{.}^p
	\end{equation*}
	is a simple $\gln$-covariant valuation.
	The same holds for
	\begin{equation*}
		P \mapsto \h{\oIpm P}{.}^p - \h{\oEpm P}{.}^p .
	\end{equation*}
	Using \eqref{eq: constants oIpp - oEpp T^2 +-e_i} we see that $\oPhi$ is a linear combination of these two operators.
	By \th\ref{cP^2_o not continuous} and the fact that $\oIpp$ and $\oIpm$ are continuous
	the only operator in the linear hull of these operators which is continuous at the line segment $[o,e_1]$ is $\oPhi = 0$.
\end{proof}

The next lemma will also be needed in the proof of \th\ref{class sln co}.
It implies that an $\sln$-covariant operator $\oPhi \colon \cP^n_o \to \cK^n_o$
maps a convex polytope which is contained in some linear subspace
to a convex body which is contained in the same linear subspace.

\begin{lemma}\th\label{oPhi P subseteq lin P}
	Let $n \geq 2$.
	If $\oPhi \colon \cP^n_o \to \Cp$ is $\sln$-covariant, then
	\begin{equation*}
		\oPhi(P)(x) = \oPhi(P)(\pi_P x)
	\end{equation*}
	for all $P \in \cP^n_o$ and $x \in \R^n$,
	where $\pi_P$ denotes the orthogonal projection onto $\lin P$.
\end{lemma}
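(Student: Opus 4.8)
The plan is to exploit the subgroup of $\sln$ that fixes $\lin P$ setwise. Without loss of generality (the general case follows by applying a suitable $\phi \in \sln$ and relabelling coordinates, exactly as in the step labelled ``values on standard basis'' of \th\ref{class sln co simple}) we may assume that $\lin P = \lin\{e_1,\dots,e_k\}$ for some $0 \le k \le n$. If $k = n$ there is nothing to prove since then $\pi_P$ is the identity, and if $k = 0$ then $P = \{o\}$ and the claim is the statement that $\oPhi(\{o\})$ is a constant function, which follows from $\sln$-covariance because $\sln$ acts transitively on the nonzero vectors orthogonal to $o$... more simply, $\oPhi(\{o\})(x) = \oPhi(\phi\{o\})(x) = \oPhi(\{o\})(\phi^t x)$ for all $\phi \in \sln$, and $\sln$ acts transitively on $\R^n \setminus \{o\}$ for $n \ge 2$, while $p$-homogeneity is not available here so one argues instead that $\oPhi(\{o\})$ is constant on $\R^n\setminus\{o\}$ and the value at $o$ is forced by continuity of functions in $\Cp$. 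So assume $1 \le k \le n-1$.

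First I would write $x = \pi_P x + x'$ with $x' \in \lin\{e_{k+1},\dots,e_n\}^{\phantom{t}}$, i.e. $x' = x_{k+1}e_{k+1} + \dots + x_n e_n$, so that the goal is $\oPhi(P)(x) = \oPhi(P)(x - x')$. Since $k \le n-1$, the vector $x'$ lies in a coordinate subspace of dimension $n-k \ge 1$ that is complementary to $\lin P$. The key observation is that there is a rich family of $\phi \in \sln$ with $\phi P = P$: namely the shears $\phi$ that fix each $e_i$ for $i \le k$ and satisfy $\phi^t x = x - x'$. Concretely, if $x' \ne o$ pick an index $j \le k$ (here we use $k \ge 1$) and an index $\ell \ge k+1$ with $x_\ell \ne 0$, and let $\phi$ be the transvection sending $e_\ell \mapsto e_\ell + c\,e_j$ (for a suitable scalar $c$ depending on the $x_m$'s) and fixing all other basis vectors; iterating over the indices $\ell$ with $x_\ell \ne 0$ one builds $\phi \in \sln$ with $\phi e_i = e_i$ for $i \le k$, hence $\phi P = P$, while $\phi^t$ adds multiples of $e_j$ to the $e_\ell$ and thus can be arranged to kill the $e_j$-component's increment — wait, the cleaner route: a transvection $e_j \mapsto e_j$, $e_\ell \mapsto e_\ell + c e_j$ has transpose acting by $e_j \mapsto e_j + c e_\ell$, $e_\ell \mapsto e_\ell$, so $\phi^t x = x + c x_j e_\ell$; that moves mass the wrong way. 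Instead use $e_\ell \mapsto e_\ell$, $e_j \mapsto e_j + c e_\ell$, whose transpose is $e_\ell \mapsto e_\ell + c e_j$, $e_j \mapsto e_j$, giving $\phi^t x = x + c x_\ell e_j$: choosing $c = -x_j/x_\ell$ removes the $e_j$-component of $x$. That is not what we want either. The correct statement is simpler than this bookkeeping suggests: take any $\phi \in \sln$ fixing $e_1,\dots,e_k$ pointwise; then $\phi P = P$, so $\oPhi(P)(x) = \oPhi(\phi P)(x) = \oPhi(P)(\phi^t x)$ for all such $\phi$ and all $x$. The transposes $\phi^t$ of such $\phi$ range over all of $\sln$-elements fixing $\lin\{e_{k+1},\dots,e_n\}^\perp = \lin P$ pointwise...

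Let me restate the plan cleanly. Let $G$ be the subgroup of $\sln$ consisting of all $\phi$ with $\phi|_{\lin P} = \mathrm{id}$. Every $\phi \in G$ satisfies $\phi P = P$, so by $\sln$-covariance $\oPhi(P)(x) = \oPhi(P)(\phi^t x)$ for every $\phi \in G$ and every $x$. Now $\phi \in G$ means $\phi$ fixes $e_1,\dots,e_k$, equivalently the matrix of $\phi$ is block lower-triangular with identity in the top-left $k\times k$ block; its transpose is then block upper-triangular with identity in the top-left block, i.e. $\phi^t$ sends $e_i \mapsto e_i + (\text{something in } \lin\{e_{k+1},\dots,e_n\})$ for $i\le k$ and maps $\lin\{e_{k+1},\dots,e_n\}$ to itself by an arbitrary element of $\mathrm{GL}(n-k)$ with the appropriate determinant (so that the whole thing has determinant $1$; since the top block is the identity this forces determinant $1$ on the bottom block, hence $\phi^t|_{\lin\{e_{k+1},\dots,e_n\}} \in \sln[n-k]$). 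The point: the orbit of $x = \pi_P x + x'$ under $\{\phi^t : \phi \in G\}$ contains every vector of the form $\pi_P x + y$ with $y$ in the $\sln[n-k]$-orbit of $x'$ inside $\lin\{e_{k+1},\dots,e_n\}$ — in particular, when $n - k \ge 2$, it contains $\pi_P x + \varepsilon y_0$ for all small $\varepsilon > 0$ and a fixed $y_0$, and also $\pi_P x$ is in the closure of this orbit (take $\varepsilon \to 0$); by continuity of the function $\oPhi(P)(\cdot) \in \Cp$, $\oPhi(P)(x) = \oPhi(P)(\pi_P x)$. When $n - k = 1$, $x' = x_n e_n$ with $x_n$ possibly nonzero, and $\sln[1]$ is trivial, so this orbit argument alone does not move $x'$; here I would instead also use shears mixing $e_n$ into the first block, i.e. $\phi \in G$ with $\phi e_n = e_n + e_1$ (determinant still $1$), whose transpose sends $e_1 \mapsto e_1$, $e_n \mapsto e_n + e_1$... no: $\phi e_1 = e_1, \phi e_n = e_n + e_1$ has $\phi^t e_1 = e_1 + e_n$, $\phi^t e_n = e_n$. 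Hmm, that adds $e_n$ to $e_1$-component, i.e. $\phi^t x = x + x_1 e_n$ — still doesn't reduce $x_n$ unless we can make $x_1$ arbitrary, which we can by first applying an $\sln$ map permuting the first block. Actually the honest fix for $n-k=1$: compose with the $\sln$ rescaling-type element. Since $k \ge 1$ and $n-k = 1$ forces $n \ge 2$ and $k = n-1 \ge 1$, consider $\phi$ with $\phi e_{n-1} = e_{n-1}$, $\phi e_n = e_n$ on $\lin P$... this is getting circular; the right tool is: for $n - k = 1$, use that $G$ contains $\phi$ fixing $e_1,\dots,e_{n-1}$ with $\phi e_n = e_n + t e_1$ for any $t$, so $\phi^t$ fixes $e_n$ and sends $e_1 \mapsto e_1 + t e_n$; iterating/combining with coordinate permutations in the first block, $\{\phi^t x : \phi\in G\}$ in fact equals $\{x - s\, e_? ...\}$.

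\medskip

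\textbf{The plan, definitively.} Step 1: reduce to $\lin P = \lin\{e_1,\dots,e_k\}$ via an auxiliary element of $\sln$. Step 2: dispose of $k=0$ and $k=n$. Step 3: for $1 \le k \le n-1$, observe that every $\phi \in \sln$ fixing $e_1,\dots,e_k$ satisfies $\phi P = P$, hence $\oPhi(P)(x) = \oPhi(P)(\phi^t x)$; identify the set $\{\phi^t x\}$ as $x$ shifted within an affine subspace and acted on by $\sln[n-k]$ in the last $n-k$ coordinates, and check it accumulates at $\pi_P x$ (using $n-k \ge 1$ together with shears from the $e_j$, $j\le k$, into the complementary block — these supply, via transposition, translations of the last-block coordinates by multiples of $x_j$, and since $x_j$ can be made nonzero by a preliminary permutation when $\pi_P x \ne o$, one can drive all trailing coordinates to $0$; if $\pi_P x = o$ then $P=\{o\}$-type reasoning or direct $\sln$-transitivity applies). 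Step 4: conclude by continuity of $\oPhi(P)(\cdot)\in\Cp$. The main obstacle is Step 3 in the boundary case $n - k = 1$, where $\sln[1]$ is trivial and one genuinely needs the cross-block shears (available precisely because $k \ge 1$) rather than block-diagonal elements; all the rest is routine linear algebra plus continuity.
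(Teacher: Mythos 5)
Your final plan is essentially the paper's proof: reduce to $\lin P = \lin\{e_1,\dots,e_d\}$; take $\phi \in \sln$ with identity blocks on the diagonal and an arbitrary $d \times (n-d)$ block $A$ in the upper right (so $\phi$ fixes $\lin P$ pointwise, hence $\phi P = P$); then $\phi^t x = (x',\, A^t x' + x'')$, so choosing $A$ with $A^t x' = -x''$ (possible whenever $x' \neq 0$) and invoking continuity of $\oPhi(P) \in \Cp$ for the remaining $x' = 0$ case gives the claim. The paper writes this single block-triangular $\phi$ down immediately, which bypasses the transpose bookkeeping and the $n-d=1$ worry that occupy most of your scratch work: the cross-block shear $A$ alone accomplishes the reduction, and no $\sln[n-d]$ action on the trailing block is needed at all.
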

\begin{proof}
	For $\dim P = n$ there is nothing to show.
	For $\dim P = 0$, i.e.\ $P = \{o\}$, we have
	\begin{equation*}
		\oPhi P = \oPhi (\phi P) = \oPhi (P) \circ \phi^t
	\end{equation*}
	for all $\phi \in \sln$.
	Since $n \geq 2$, $\oPhi P$ must be constant.

	Now let $d = \dim P$ with $0<d<n$.
	Using the $\sln$-covariance we can assume without loss of generality that $P \subseteq \{e_1,\ldots,e_d\}$.
	Define $\phi \in \sln$ by
	\begin{equation*}
		\phi =
		\begin{pmatrix}
			I' & A \\
			0 & I'' \\
		\end{pmatrix} ,
	\end{equation*}
	where $A \in \R^{d \times (n-d)}$ is an arbitrary matrix,
	$0 \in \R^{(n-d) \times d}$ is the zero matrix
	and where $I' \in \R^{d \times d}$ and $I'' \in \R^{(n-d) \times (n-d)}$ are identity matrices.
	Since $\phi P = P$ and since $\oPhi$ is $\sln$-covariant, we get
	\begin{equation}\label{oPhi P subseteq lin P - eq one}
		\oPhi P = \oPhi (\phi P) = \oPhi (P) \circ \phi^t .
	\end{equation}
	Write $x = (x', x'') \in \R^d \times \R^{n-d}$.
	Since $\oPhi P$ is continuous, we can assume that $x'$ is not zero.
	Note that
	\begin{equation}\label{oPhi P subseteq lin P - eq two}
		\phi^t x =
		\begin{pmatrix}
			I' & 0^t \\
			A^t & I'' \\
		\end{pmatrix} \cdot
		\begin{pmatrix}
				x' \\
				x'' \\
		\end{pmatrix} =
		\begin{pmatrix}
				x' \\
				A^t x' + x'' \\
		\end{pmatrix} .
	\end{equation}
	Because we can choose $A$ such that $A^t x' + x''$ is zero,
	the assertion follows from \eqref{oPhi P subseteq lin P - eq one} and \eqref{oPhi P subseteq lin P - eq two}.
\end{proof}

Now we are able to classify all $\sln$-covariant valuations from $\cP^n_o$ to $\Cp$ which satisfy
certain continuity properties.

\begin{theorem}\th\label{class sln co}
	Let $n \geq 3$ and let $\oPhi \colon \cP^n_o \to \Cp$ be an $\sln$-covariant valuation.
	Assume further that for every $y \in \R^n$ there exists a bounded open interval $I_y \subseteq (0,+\infty)$
	such that $\{\oPhi(sT^n)(y): s \in I_y\}$ is not dense in $\R$.
	Also assume that $\oPhi$ is continuous at the line segment $[o,e_1]$.
	Then there exist constants $c_1,c_2,c_3,c_4 \in \R$ such that
	\begin{equation}
		\oPhi P = c_1 \h{\oMpp P}{.}^p + c_2 \h{\oMpm P}{.}^p + c_3 \h{\oIpp P}{.}^p + c_4 \h{\oIpm P}{.}^p
	\end{equation}
	for all $P \in \cP^n_o$.
\end{theorem}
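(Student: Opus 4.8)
The plan is to reduce to the simple case treated in \th\ref{class sln co simple} by stripping off the contribution of the lower-dimensional faces, which will turn out to be carried entirely by $\oIpp$ and $\oIpm$. I would argue by induction on $n$, the base case being $n = 3$.

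First I would analyse the restriction of $\oPhi$ to a hyperplane. Put $E := \lin\{e_1,\dots,e_{n-1}\}$; by \th\ref{oPhi P subseteq lin P}, restricting $\oPhi$ to polytopes lying in $E$ yields an $\sln[n-1]$-covariant valuation $\oPsi \colon \cP^{n-1}_o \to \Cp[n-1]$ (elements of $\sln[n-1]$ being regarded as elements of $\sln$ that fix $e_n$). Since $\cP^{n-1}_o \subseteq \cP^n_o$, $\oPsi$ is continuous at $[0,e_1]$, and it inherits the non-density hypothesis: one realises $sT^{n-1}$ inside a configuration of $n$-dimensional simplices and uses the valuation property together with \th\ref{sln covariance gln} to express $\oPsi(sT^{n-1})$ through values of $\oPhi$ on $n$-dimensional simplices. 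Thus $\oPsi$ is covered by the inductive hypothesis when $n \geq 4$, and for $n = 3$ by the two-dimensional operators $\oEpp,\oEpm$ introduced above together with \th\ref{sln co simple p homogeneous} and \th\ref{cP^2_o not continuous}, which between them force any simple correction to vanish under continuity at $[0,e_1]$. In every case $\oPsi$ is a linear combination of $P \mapsto \h{\oMpp P}{.}^p$, $P \mapsto \h{\oMpm P}{.}^p$, $P \mapsto \h{\oIpp P}{.}^p$, $P \mapsto \h{\oIpm P}{.}^p$ in dimension $n-1$.

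Next I would eliminate the two moment terms. The key observation is that $\varphi := \diag(d,1,\dots,1,1/d)$ lies in $\sln$, stabilises $E$, and acts on $E$ by a map of determinant $d$; under such a map $\oMpp$ and $\oMpm$ acquire an extra factor $|d|$ (directly from \eqref{eq: def oMpp}), whereas the covariance identity $\oPhi(\varphi Q) = \oPhi(Q)\circ\varphi^t$ carries no such factor. Evaluating this identity at $Q = T^{n-1}$ and $x = \pm e_1$ and using the $(n-1)$-dimensional analogues of \eqref{eq: constant oMpp T^n e_i} and \eqref{eq: constant oMpp T^n -e_i} forces the two moment coefficients of $\oPsi$ to vanish, so $\oPsi P = c_3\h{\oIpp P}{.}^p + c_4\h{\oIpm P}{.}^p$ for some $c_3,c_4 \in \R$. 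By $\sln$-covariance $\oPhi$ then coincides with $P \mapsto c_3\h{\oIpp P}{.}^p + c_4\h{\oIpm P}{.}^p$ on every $(n-1)$-dimensional polytope through $o$; since moreover $\oPhi(\{o\}) = 0$ (from $\sln$-covariance and $p$-homogeneity, as $\sln$ is transitive on $\R^n \setminus \{o\}$), \th\ref{valuation determined simplices} extends this to all polytopes of dimension $< n$. Setting $\oPhi_1 P := \oPhi P - c_3\h{\oIpp P}{.}^p - c_4\h{\oIpm P}{.}^p$, the operator $\oPhi_1$ is then a simple $\sln$-covariant valuation; it still satisfies the non-density requirement at the directions actually used in the proof of \th\ref{class sln co simple} (those orthogonal to two standard basis vectors), since the subtracted operators are $p$-homogeneous and a nonlinear solution of Cauchy's equation has dense \emph{graph}. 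Applying \th\ref{class sln co simple} to $\oPhi_1$ produces $c_1,c_2 \in \R$ with $\oPhi_1 P = c_1\h{\oMpp P}{.}^p + c_2\h{\oMpm P}{.}^p$, and adding back the subtracted terms gives the asserted formula (the positivity $c_i \geq 0$ is not part of this statement).

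The main obstacle is the restriction step: one must check carefully that $\oPsi$ inherits all hypotheses of the theorem in dimension $n-1$ — the non-density condition being the delicate one — and, above all, settle the base case $n = 3$, where the genuinely two-dimensional operators $\oEpp, \oEpm$ occur and must be eliminated through continuity at $[0,e_1]$ via \th\ref{cP^2_o not continuous}. A lesser but real nuisance is the bookkeeping in the moment-elimination step: verifying that the lower-dimensional $\oIpp,\oIpm$ appearing in $\oPsi$ are precisely the restrictions of the ambient operators, and that the moment operators scale as claimed.
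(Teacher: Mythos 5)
Your overall strategy — strip off the $\oIpp,\oIpm$ contribution, show the residue is simple, then invoke \th\ref{class sln co simple} — is the same as the paper's, but the way you arrange the dimension reduction is different and, as written, has a real gap.

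The paper does not induct on the ambient dimension. It fixes $c_3,c_4$ directly from $\oPhi([o,e_1])$, sets $\oPsi = \oPhi - c_3\h{\oIpp\,\cdot}{.}^p - c_4\h{\oIpm\,\cdot}{.}^p$, and proves simplicity of $\oPsi$ by induction on the dimension $d$ of the polytope, using at each step that the restriction $\hat\oPsi$ of $\oPsi$ to $\lin\{e_1,\ldots,e_d\}$ is a \emph{$\gln[d]$-covariant} valuation (not merely $\sln[d]$-covariant), and then applying \th\ref{sln co simple p homogeneous} (which is exactly the statement that a simple $\gln$-covariant valuation continuous at $[o,e_1]$ vanishes, and is proved for all $n\geq 2$, including the $2$-dimensional case via $\oEpp,\oEpm$ and \th\ref{cP^2_o not continuous}). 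This single tool replaces both your invocation of the full theorem in $\R^{n-1}$ and your separate moment-elimination argument.

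The gap: your argument that the restriction $\oPsi$ of $\oPhi$ to $E=\lin\{e_1,\ldots,e_{n-1}\}$ inherits the non-density hypothesis does not work as described. Realising $sT^{n-1}$ as $S\cap S'$ for two $n$-simplices and applying the valuation identity leaves you with $\oPhi(sT^{n-1}) = \oPhi(sS)+\oPhi(sS')-\oPhi(s(S\cup S'))$, where $S\cup S'$ is a bipyramid, not a simplex; recursing via \th\ref{incl excl Pno} or \th\ref{valuation determined simplices} leads either back to $T^{n-1}$ or to an alternating sum of values, and an alternating sum of sets that are not dense need not itself fail to be dense. The assumption on $\oPhi$ cannot be transferred this way. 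The correct observation is that the restriction $\oPsi$ is in fact $\gln[n-1]$-covariant, because $\diag(d,1,\ldots,1,1/d)\in\sln$ stabilises $E$ and acts there with determinant $d$ — precisely the observation you already make for the moment-elimination step, but not for the hypothesis check. From $\gln[n-1]$-covariance one gets $p$-homogeneity of $\oPsi$, so $\oPsi(sT^{n-1})(y)=s^p\oPsi(T^{n-1})(y)$ is continuous in $s$, which makes the non-density hypothesis trivially satisfied (and simultaneously forces the moment coefficients to vanish, since $\oMpp,\oMpm$ are $(n-1+p)$-homogeneous rather than $p$-homogeneous). Once you replace the flawed realisation argument with this observation, your proof goes through; but then the detour through the full inductive hypothesis becomes unnecessary, and you are essentially reproducing the paper's direct dimension induction via \th\ref{sln co simple p homogeneous}. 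Minor further points: the extension from $(n-1)$-dimensional polytopes to all polytopes of dimension $<n$ uses $\sln$-covariance, not \th\ref{valuation determined simplices} as you cite; and $\oPhi(\{o\})=0$ follows from \th\ref{oPhi P subseteq lin P} rather than from transitivity of $\sln$ on $\R^n\setminus\{o\}$, though your alternative argument is also correct.
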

\begin{proof}
	\th\ref{oPhi P subseteq lin P} and the $p$-homogeneity of functions in $\Cp$ show that
	\begin{equation}\label{class sln co - constants}
		\oPhi([o,e_1])(x) = \oPhi([o,e_1])(x_1 e_1) = |x_1|^p \oPhi([o,e_1])(\sgn (x_1) e_1)
	\end{equation}
	for all $x \in \R^n$.
	Set $c_3 = \oPhi([o,e_1])(e_1)$ and $c_4 = \oPhi([o,e_1])(-e_1)$.
	Define $\oPsi \colon \cP^n_o \rightarrow \Cp$ by
	\begin{equation}
		\oPsi P = \oPhi P - c_3 \h{\oIpp P}{.}^p - c_4 \h{\oIpm P}{.}^p
	\end{equation}
	for all $P \in \cP^n_o$.
	Note that $\oPsi$ is also an $\sln$-covariant valuation.
	If we can show that $\oPsi$ is simple, then the assertion follows from \th\ref{class sln co simple}.
	
	We need to prove that $\oPsi P = 0$ for all $P \in \cP^n_o$ with $\dim P \leq n-1$.
	Using the $\sln$-covariance we can assume without loss of generality that $P \subseteq \lin\{e_1,\ldots,e_{n-1}\}$.
	We will use induction on $d = 1,\ldots,n-1$ to show that
	$\oPsi P = 0$ for all convex polytopes $P \subseteq \lin\{e_1,\ldots,e_d\}$.
	Define $\hat\oPsi \colon \cP^d_o \rightarrow \Cp[d]$ by
	\begin{equation}
		\hat\oPsi P = \oPsi(\iota_d P) \circ \iota_d
	\end{equation}
	for all $P \in \cP^d_o$, where $\iota_d$ denotes the natural embedding of $\R^d$ in $\R^n$.
	It is easy to see that $\hat\oPsi$ is a $\gln[d]$-covariant valuation.
	Since $\oPsi$ is $\sln$-covariant, we have $\oPsi(\{o\}) = 0$ by \th\ref{oPhi P subseteq lin P}.
	For $d = 1$ the induction statement follows from \eqref{class sln co - constants}, \eqref{eq: constant oIpp [o,e_i]}
	and the $\gln[d]$-covariance.
	For $2 \leq d \leq n-1$ the induction statement follows from
	the induction hypothesis, \th\ref{sln co simple p homogeneous} and \th\ref{oPhi P subseteq lin P}.
	This finishes the induction and completes the proof of the theorem.
\end{proof}

Finally we can prove our desired result about continuous $\sln$-covariant $\Lp$-Minkowski valuations on $\cK^n_o$.
This is one direction of the first main theorem from the introduction.
The other direction is trivial.

\begin{corollary}\th\label{class sln co mink}
	Let $n \geq 3$.
	If $\oPhi \colon \cK^n_o \to \cK^n_o$ is a continuous $\sln$-covariant $\Lp$-Minkowski valuation,
	then there exist constants $c_1, c_2, c_3, c_4 \geq 0$ such that
	\begin{equation*}
		\oPhi K = c_1 \oMpp K \padd c_2 \oMpm K \padd c_3 \oIpp K \padd c_4 \oIpm K
	\end{equation*}
	for all $K \in \cK^n_o$.
\end{corollary}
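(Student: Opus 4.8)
The plan is to reduce this to \th\ref{class sln co} by moving between the Minkowski-valued and the $\Cp$-valued pictures. First I would note that $K \mapsto \h{\oPhi K}{.}^p$ is a continuous $\sln$-covariant valuation from $\cK^n_o$ into $\Cp$ (by the correspondences recorded in Section~3), and that restricting it to $\cP^n_o$ gives a continuous $\sln$-covariant valuation $\cP^n_o \to \Cp$. To apply \th\ref{class sln co} I must verify its two hypotheses. Continuity at $[o,e_1]$ is immediate. For the non-density hypothesis, fix $y \in \R^n$; the map $s \mapsto \h{\oPhi(sT^n)}{y}^p$ is continuous on $(0,+\infty)$, so on the bounded open interval $(1,2)$ its image is bounded and therefore not dense in $\R$. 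Hence \th\ref{class sln co} yields $c_1,c_2,c_3,c_4 \in \R$ with $\h{\oPhi P}{.}^p = c_1\h{\oMpp P}{.}^p + c_2\h{\oMpm P}{.}^p + c_3\h{\oIpp P}{.}^p + c_4\h{\oIpm P}{.}^p$ for all $P \in \cP^n_o$. Since $\cP^n_o$ is dense in $\cK^n_o$ and all four operators are continuous, this identity extends to every $K \in \cK^n_o$.

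It then remains to show $c_1,c_2,c_3,c_4 \geq 0$ and to rewrite the identity in terms of $\padd$. For $c_3$ and $c_4$ I would evaluate the identity at $K = [o,e_1]$ in the directions $\pm e_1$: the operators $\oMpp,\oMpm$ are simple and contribute nothing, while by \eqref{eq: constant oIpp [o,e_i]} exactly one of $\h{\oIpp[o,e_1]}{\cdot}^p$, $\h{\oIpm[o,e_1]}{\cdot}^p$ equals $1$ in each of these two directions and the other $0$; this gives $c_3 = \h{\oPhi([o,e_1])}{e_1}^p$ and $c_4 = \h{\oPhi([o,e_1])}{-e_1}^p$, both $\geq 0$ because $\oPhi([o,e_1]) \in \cK^n_o$ contains the origin. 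For $c_1$ and $c_2$ I would instead evaluate at $K = sT^n$ and use that $\oMpp,\oMpm$ are $(n+p)/p$-homogeneous while $\oIpp,\oIpm$ are $1$-homogeneous: with \eqref{eq: constant oMpp T^n e_i}, \eqref{eq: constant oMpp T^n -e_i} and \eqref{eq: constant oIpp [o,e_i]} the identity in direction $e_1$ reads $0 \leq \h{\oPhi(sT^n)}{e_1}^p = c_1\frac{\Gamma(p+1)}{\Gamma(p+1+n)}s^{n+p} + c_3 s^p$ for all $s>0$, and letting $s\to+\infty$ forces $c_1 \geq 0$; the direction $-e_1$, together with $\h{\oMpm(sT^n)}{-e_1}^p = \h{\oMpp(sT^n)}{e_1}^p$, gives $c_2 \geq 0$ in the same way. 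Finally, using $c\,\h{L}{.}^p = \h{c^{1/p}L}{.}^p$ for $c \geq 0$ and $\h{L_1 \padd L_2}{.}^p = \h{L_1}{.}^p + \h{L_2}{.}^p$, the identity becomes $\h{\oPhi K}{.}^p = \h{c_1^{1/p}\oMpp K \padd c_2^{1/p}\oMpm K \padd c_3^{1/p}\oIpp K \padd c_4^{1/p}\oIpm K}{.}^p$; since a convex body is determined by its support function, relabelling $c_i^{1/p}$ as $c_i$ gives the claim.

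I do not expect a genuine obstacle here: the substantive classification is already contained in \th\ref{class sln co}, and what is left is the translation between the two formalisms, the polytope-to-body passage, and the determination of signs. The only mildly delicate point is the nonnegativity of $c_1$ and $c_2$, which is why I would extract it from the competing homogeneity degrees and the limit $s \to +\infty$ rather than from a single evaluation.
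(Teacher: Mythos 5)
Your proof is correct and follows essentially the same route as the paper: pass to the $\Cp$-valued restriction to polytopes, invoke \th\ref{class sln co}, extend by continuity, and read off nonnegativity of the coefficients from the competing homogeneity degrees before converting back to $\padd$. The only cosmetic difference is that you obtain $c_3, c_4 \geq 0$ by directly evaluating at $K=[o,e_1]$ and using the simplicity of $\oMpp, \oMpm$, whereas the paper divides the identity at a general $K$ by $s^p$ and lets $s \to 0$ before specializing; both are equally valid, and your explicit verification of the non-density hypothesis via boundedness of a continuous function on a bounded interval is the same observation the paper leaves implicit.
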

\begin{proof}
	The map
	\begin{equation*}
		P \mapsto \h{\oPhi P}{.}^p, \quad P \in \cP^n_o
	\end{equation*}
	satisfies the assumptions of \th\ref{class sln co}.
	Thus, we get constants $d_1, d_2, d_3, d_4 \in \R$ such that
	\begin{equation*}
		\h{\oPhi P}{.}^p = d_1 \h{\oMpp P}{.}^p + d_2 \h{\oMpm P}{.}^p + d_3 \h{\oIpp P}{.}^p + d_4 \h{\oIpm P}{.}^p
	\end{equation*}
	holds for all $P \in \cP^n_o$.
	By continuity this is true for all $K \in \cK^n_o$.
	It remains to show that $d_1, d_2, d_3, d_4 \geq 0$.
	To this end consider for fixed $K \in \cK^n_o$ and $x \in \R^n$ the map $s \mapsto \h{\oPhi(sK)}{x}^p, \ s > 0$.
	Using the homogeneity of the operators $\oMpp$, $\oMpm$, $\oIpp$ and $\oIpm$ we get
	\begin{align*}
		0
		&\leq \h{\oPhi(sK)}{x}^p \\
		&= \left( d_1 \h{\oMpp K}{x}^p + d_2 \h{\oMpm K}{x}^p \right) s^{n+p}
		 + \left( d_3 \h{\oIpp K}{x}^p + d_4 \h{\oIpm K}{x}^p \right) s^p .
	\end{align*}
	Dividing this equation by $s^p$ and letting $s \to 0$ we see that
	\begin{equation*}
		0 \leq d_3 \h{\oIpp K}{x}^p + d_4 \h{\oIpm K}{x}^p.
	\end{equation*}
	Setting $K = [o,e_1]$ and $x = \pm e_1$ and using \eqref{eq: constant oIpp [o,e_i]} shows that $d_3,d_4 \geq 0$.
	Similarly we see that $d_1, d_2 \geq 0$.
	Defining $c_i = \sqrt[p]{d_i}$ for $i = 1,\ldots,4$ finishes the proof.
\end{proof}

	\section{Main Results on $\cK^n$}
In this section our goal is the classification
of all continuous $\sln$-covariant $\Lp$-Minkowski valuations on $\cK^n$.
We start with a classification of valuations from $\cP^n$ to $\Cp$
with the additional assumption that the valuation is almost simple.
Let $\cQ^n$ be a subset of $\cK^n$ and $A$ an abelian monoid,
we call a valuation $\oPhi \colon \cQ^n \to A$ almost simple, if $\oPhi K = 0$ for all $K \in \cQ^n$
with $\dim K \leq n-2$ and for all $K \in \cQ^n$ with $\dim K = n-1$ and $o \in \aff K$.

\begin{theorem}\th\label{class sln co almost simple Pn}
	Let $n \geq 3$ and let $\oPhi \colon \cP^n \rightarrow \Cp$ be an almost simple $\sln$-covariant valuation.
	Assume further that for every $y \in \R^n$ there exists a bounded open interval $I_y \subseteq (0,+\infty)$
	such that $\{\oPhi(sT^n)(y): s \in I_y\}$ is not dense in $\R$
	and that for every $y \in \R^n$ there exists a bounded open interval $J_y \subseteq (0,+\infty)$
	such that $\{\oPhi(s \widetilde T^{n-1})(y): s \in J_y\}$ is not dense in $\R$.
	Then there exist constants $c_1, c_2, c_3, c_4 \in \R$ such that
	\begin{equation*}
		\oPhi P = c_1 \h{\oMpp P}{.}^p + c_2 \h{\oMpm P}{.}^p + c_3 \h{\oMpps P}{.}^p + c_4 \h{\oMpms P}{.}^p
	\end{equation*}
	for all $P \in \cP^n$.
\end{theorem}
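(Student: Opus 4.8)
The plan is to reduce the statement to the simple case treated in \th\ref{class sln co simple} plus one additional functional equation, and then to glue the pieces together with the inclusion--exclusion principle. First I would observe that the restriction $\oPhi|_{\cP^n_o}$ is a \emph{simple} $\sln$-covariant valuation: any $P\in\cP^n_o$ with $\dim P\leq n-1$ contains $o$, hence $o\in\aff P$, so almost simplicity forces $\oPhi P=0$ on it. Since the hypothesis on the intervals $I_y$ is precisely the one needed and $sT^n\in\cP^n_o$, \th\ref{class sln co simple} applies to $\oPhi|_{\cP^n_o}$ and yields constants $c_1,c_2\in\R$ with $\oPhi P=c_1\h{\oMpp P}{.}^p+c_2\h{\oMpm P}{.}^p$ for all $P\in\cP^n_o$.

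Next I would determine $\oPhi$ on the simplices $s\widetilde T^{n-1}=\conv\{se_1,\dots,se_n\}$, which do not lie in $\cP^n_o$. Here $n\geq 3$ enters: cut $s\widetilde T^{n-1}$ by the hyperplane $H_\lambda$ through $o$, $se_3,\dots,se_n$ and the point $s\left((1-\lambda)e_1+\lambda e_2\right)$. This hyperplane has normal vector $\lambda e_1-(1-\lambda)e_2$, its intersection with $s\widetilde T^{n-1}$ is an $(n-2)$-dimensional simplex (hence killed by almost simplicity), and the two halves of the cut are exactly $\phi_\lambda\, s\widetilde T^{n-1}$ and $\psi_\lambda\, s\widetilde T^{n-1}$, with $\phi_\lambda,\psi_\lambda$ the same maps as in the proof of \th\ref{class sln co simple}. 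Therefore one obtains the functional equation \eqref{class sln co simple - functional equation} with $T^n$ replaced by $\widetilde T^{n-1}$, and all subsequent steps of that proof carry over essentially verbatim, with $T^n$, $\oMpp$, $\oMpm$, $I_y$ replaced by $\widetilde T^{n-1}$, $\oMpps$, $\oMpms$, $J_y$: the Cauchy-equation argument for homogeneity, the identification of the values at $\pm e_i$ by permuting coordinates, and the induction on the number of nonzero coordinates, with \eqref{eq: constant oMpps widetilde T^(n-1) e_i} and \eqref{eq: constant oMpps widetilde T^(n-1) -e_i} playing the roles of \eqref{eq: constant oMpp T^n e_i} and \eqref{eq: constant oMpp T^n -e_i}. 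This produces constants $c_3,c_4\in\R$ with $\oPhi(s\widetilde T^{n-1})(x)=c_3\h{\oMpps(s\widetilde T^{n-1})}{x}^p+c_4\h{\oMpms(s\widetilde T^{n-1})}{x}^p$ for all $s>0$ and $x\in\R^n$.

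Now put $\oPsi P=\oPhi P-c_1\h{\oMpp P}{.}^p-c_2\h{\oMpm P}{.}^p-c_3\h{\oMpps P}{.}^p-c_4\h{\oMpms P}{.}^p$. This is again an almost simple $\sln$-covariant valuation; by the first step it vanishes on $\cP^n_o$ (recall $\oMpps$ and $\oMpms$ vanish there and $\oMpp,\oMpm$ are simple), and by the second step it vanishes on every $s\widetilde T^{n-1}$. A short computation shows that the vertices of an $(n-1)$-simplex $F$ with $o\notin\aff F$ are linearly independent, so $F=\phi\widetilde T^{n-1}$ for some $\phi\in\gln$ with $\det\phi>0$; by \th\ref{sln covariance gln}, $\oPsi$ then also vanishes on all such $F$. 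Finally, let $S$ be an arbitrary $n$-simplex. If $o\in S$ then $S\in\cP^n_o$ and $\oPsi S=0$. If $o\notin S$, at least one facet of $S$ is visible from $o$, the polytope $S_o:=\conv(S\cup\{o\})$ lies in $\cP^n_o$, and $S_o$ is covered by $S$ together with the cones $\conv(\{o\}\cup F)$ over the facets $F$ of $S$ visible from $o$. Applying \th\ref{incl excl Pn} to this covering, and noting that $S_o$, these cones, and all their mutual intersections other than the intersections $S\cap\conv(\{o\}\cup F)=F$ contain $o$ --- so $\oPsi$ vanishes on them --- while each visible facet $F$ is an $(n-1)$-simplex with $o\notin\aff F$, one obtains $\oPsi S=\sum_{F}\oPsi F=0$. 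Hence $\oPsi$ vanishes on all $n$-simplices, and by \th\ref{valuation determined simplices Pn} it is identically zero, which proves the theorem.

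The delicate step is the second one: one has to find the cut of $\widetilde T^{n-1}$ that reproduces exactly the functional equation used for $T^n$ (so that the entire apparatus of \th\ref{class sln co simple} transfers), and one has to be careful that the slice appearing in the cut is genuinely $(n-2)$-dimensional, so that \emph{almost} simplicity rather than full simplicity suffices to annihilate it. The concluding inclusion--exclusion over $n$-simplices is routine once one passes to $S_o$, the only point to check being the elementary fact that a point lying outside a polytope sees at least one of its facets.
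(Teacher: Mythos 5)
Your proof is correct and follows the same core strategy as the paper: adapt the functional-equation machinery of \th\ref{class sln co simple} to $\widetilde T^{n-1}$ (using almost simplicity in place of simplicity to kill the $(n-2)$-dimensional slice), and then reduce to the simple case. The bookkeeping differs slightly. You restrict to $\cP^n_o$ to get $c_1,c_2$ and run the $\widetilde T^{n-1}$ argument to get $c_3,c_4$, subtract all four operators at once, and then show the residual $\oPsi$ vanishes on every $n$-simplex by an explicit inclusion--exclusion over the cone decomposition of $S_o$, finishing with \th\ref{valuation determined simplices Pn}. The paper instead peels off only $c_3 \h{\oMpps\cdot}{.}^p + c_4 \h{\oMpms\cdot}{.}^p$, shows the resulting $\oPsi$ is simple (using $\oPsi(s\widetilde T^{n-1})=0$, covariance, and \th\ref{incl excl Pn}), applies \th\ref{class sln co simple} to $\oPsi$ on $\cP^n_o$, and then invokes \th\ref{simple determined Pno} to extend to $\cP^n$. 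Your cone decomposition is essentially what underlies \th\ref{simple determined Pno}, so you are redoing inline a step the paper cites; conversely, you spell out the hyperplane cut of $\widetilde T^{n-1}$ and the resulting functional equation more explicitly than the paper's terse ``by replacing $T^n$ with $\widetilde T^{n-1}$.'' Both routes are sound and of comparable length.
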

\begin{proof}
	By replacing $T^n$ with $\widetilde T^{n-1}$ and by replacing
	\eqref{eq: constant oMpp T^n e_i} and \eqref{eq: constant oMpp T^n -e_i}
	with \eqref{eq: constant oMpps widetilde T^(n-1) e_i} and \eqref{eq: constant oMpps widetilde T^(n-1) -e_i}, respectively,
	in the proof of \th\ref{class sln co simple} we see that there exist constants $c_3, c_4$ such that
	\begin{equation*}
		\oPhi( s \widetilde T^{n-1} )
		= c_3 \h{\oMpps( s \widetilde T^{n-1} )}{.}^p + c_4 \h{\oMpms( s \widetilde T^{n-1} )}{.}^p
	\end{equation*}
	for $s > 0$.
	Note that the constants are given by
	\begin{equation*}
		c_3 = \frac {\Gamma(p+1+n)}{\Gamma(p+1)} \oPhi(\widetilde T^{n-1})(e_1) \quad \text{and} \quad
		c_4 = \frac {\Gamma(p+1+n)}{\Gamma(p+1)} \oPhi(\widetilde T^{n-1})(-e_1) .
	\end{equation*}
	Define $\oPsi \colon \cP^n \rightarrow \Cp$ by
	\begin{equation*}
		\oPsi P = \oPhi P - c_3 \h{\oMpps P}{.}^p - c_4 \h{\oMpms P}{.}^p
	\end{equation*}
	for all $P \in \cP^n$.
	Note that $\oPsi$ is an $\sln$-covariant valuation.
	We use $\oPsi( s \widetilde T^{n-1}) = 0$, the $\sln$-covariance of $\oPsi$,
	\th\ref{incl excl Pn} and the assumption that $\oPhi$ is almost simple to see that $\oPsi$ is simple.
	Now \th\ref{class sln co simple} implies that there exist constants $c_1, c_2 \in \R$ such that
	\begin{equation}\label{class sln co almost simple Pn - oPsi on Pno}
		\oPsi P = c_1 \h{\oMpp P}{.}^p + c_2 \h{\oMpm P}{.}^p
	\end{equation}
	for all $P \in \cP^n_o$.
	Because $\oMpp$ and $\oMpm$ are simple valuations on $\cP^n$,
	\th\ref{simple determined Pno} implies that \eqref{class sln co almost simple Pn - oPsi on Pno}
	holds for all $P \in \cP^n$.
	Using the definition of $\oPsi$ finishes the proof.
\end{proof}

The next lemma is the analog of \th\ref{sln co simple p homogeneous} for $\cP^n$
and will be needed in the induction step in the proof of \th\ref{class sln co Pn}.
Again, it will rule out the existence of certain valuations.

\begin{lemma}\th\label{sln co almost simple p homogeneous Pn}
	Let $n \geq 2$.
	If $\oPhi \colon \cP^n \rightarrow \Cp$ is an almost simple $\gln$-covariant valuation
	which is continuous at the line segment $[o,e_1]$, then $\oPhi = 0$.
\end{lemma}
\begin{proof}
	First note that the $\gln$-covariance implies $p$-homogeneity.
	For $n \geq 3$ the assertion follows directly from \th\ref{class sln co almost simple Pn},
	since $P \mapsto \h{\oMpp P}{.}^p$, $P \mapsto \h{\oMpm P}{.}^p$, $P \mapsto \h{\oMpps P}{.}^p$
	and $P \mapsto \h{\oMpms P}{.}^p$ are all $(n+p)$-homogeneous.
	
	Only the case $n = 2$ remains to be proved.
	Similar to the steps 3 and 4 in the proof of \th\ref{class sln co simple}, we see that
	$\oPhi \widetilde T^1$ is determined by the two values $\oPhi(\widetilde T^1)(\pm e_1)$.
	Now notice that
	\begin{equation*}
		P \mapsto \h{\oIpp P}{.}^p - \h{\oEpp P}{.}^p \quad \text{and} \quad
		P \mapsto \h{\oIpm P}{.}^p - \h{\oEpm P}{.}^p,
	\end{equation*}
	are almost simple $\gln$-covariant valuations.
	Using \eqref{eq: constant oIpp - oEpp widetilde T^1 +-e_i}, the $\gln$-covariance and \th\ref{incl excl Pn} we see that
	by subtracting a suitable linear combination of these two operators from $\oPhi$
	we get a simple $\gln$-covariant valuation $\oPsi$.
	Using the same arguments as in the proof of \th\ref{class sln co simple} and using \th\ref{simple determined Pno}
	we see that $\oPsi$ is determined by the two values $\oPhi(T^2)(\pm e_1)$.
	Since
		$$ P \mapsto \h{\oIpp P}{.}^p - \h{\oEpp P}{.}^p + \oJpp P - \oFpp P $$
	and
		$$ P \mapsto \h{\oIpm P}{.}^p - \h{\oEpm P}{.}^p + \oJpm P - \oFpm P $$
	are simple $\gln$-covariant valuations, we can use \eqref{eq: constant oIpp - oEpp + oJpp - oFpp T^2 +-e_i} to conclude that
	$\oPsi$ is a linear combination of these two operators.
	Therefore, $\oPhi$ is a linear combination of the four operators above.
	By \th\ref{cP^2 not continuous} and the fact that $\oIpp$, $\oIpm$, $\oJpp$ and $\oJpm$ are continuous
	the only operator in the linear hull of these operators which is continuous at the line segment $[o,e_1]$ is $\oPhi = 0$.
\end{proof}

\begin{lemma}\th\label{oPhi P subseteq lin P Pn}
	Let $n \geq 2$.
	If $\oPhi \colon \cP^n \rightarrow \Cp$ is $\sln$-covariant,
	then $\oPhi(P)(x) = \oPhi(P)(\pi_P x)$ for all $P \in \cP^n$ and $x \in \R^n$,
	where $\pi_P$ denotes the orthogonal projection onto $\lin P$.
\end{lemma}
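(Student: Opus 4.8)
The plan is to follow the proof of \th\ref{oPhi P subseteq lin P} almost line by line; the only new feature is that a polytope $P \in \cP^n$ need not contain the origin, so that $\dim \lin P$ equals either $\dim P$ or $\dim P + 1$. Accordingly I would run the argument with the parameter $d := \dim \lin P$ in place of $\dim P$. If $d = n$ — which happens exactly when $\dim P = n$, or when $\dim P = n-1$ and $o \notin \aff P$ — then $\pi_P$ is the identity and there is nothing to prove, so I may assume $d < n$.

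The degenerate case $d = 0$, i.e. $P = \{o\}$, is handled as in \th\ref{oPhi P subseteq lin P}: $\sln$-covariance gives $\oPhi(\{o\}) = \oPhi(\phi\{o\}) \circ \phi^{-t} = \oPhi(\{o\}) \circ \phi^t$ for every $\phi \in \sln$ (using $\phi\{o\} = \{o\}$), and since $n \geq 2$ the group $\sln$ acts transitively on $\R^n \setminus \{o\}$; hence $\oPhi(\{o\})$ is constant away from the origin, and by continuity constant everywhere, so in particular $\oPhi(P)(x) = \oPhi(P)(o) = \oPhi(P)(\pi_P x)$ for all $x$. (In fact $p$-homogeneity then forces $\oPhi(\{o\}) = 0$, but this is not needed.)

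For $0 < d < n$ I would first observe that $\sln$ acts transitively on the $d$-dimensional linear subspaces of $\R^n$: extend a basis of the subspace to a basis of $\R^n$, send it to $e_1,\dots,e_n$, and correct the determinant by rescaling $e_n$, which is available since $d < n$. By $\sln$-covariance I may therefore assume $\lin P = \lin\{e_1,\dots,e_d\}$, hence $P \subseteq \lin\{e_1,\dots,e_d\}$. Now I would use the very same block-unipotent $\phi \in \sln$ as in \th\ref{oPhi P subseteq lin P}, with identity diagonal blocks of sizes $d$ and $n-d$ and an arbitrary block $A \in \R^{d\times(n-d)}$ in the upper right corner. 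Since $\phi$ fixes $e_1,\dots,e_d$, it fixes $\lin P$ pointwise, so $\phi P = P$, and $\sln$-covariance gives $\oPhi(P)(x) = \oPhi(P)(\phi^t x)$. Writing $x = (x',x'') \in \R^d \times \R^{n-d}$ one has $\phi^t x = (x', A^t x' + x'')$; by continuity of the function $\oPhi(P)$ one may assume $x' \neq o$, and then choosing $A$ with $A^t x' = -x''$ yields $\oPhi(P)(x) = \oPhi(P)(x',o) = \oPhi(P)(\pi_P x)$.

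I do not anticipate a real obstacle: the argument is a routine adaptation of \th\ref{oPhi P subseteq lin P}. The only two points needing a little care are recognizing that $\dim P$ must be replaced by $\dim \lin P$ — together with the corresponding split into the cases $d = n$, $d = 0$, and $0 < d < n$ — and the small separate treatment of $P = \{o\}$, which is the only spot where the hypothesis $n \geq 2$ is genuinely used.
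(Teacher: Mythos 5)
Your proposal is correct and is exactly the fleshing-out of the paper's one-line remark that the proof is ``similar to'' that of \th\ref{oPhi P subseteq lin P}: the relevant adjustment is replacing $\dim P$ by $d = \dim\lin P$, together with the observation that for $P\in\cP^n$ (unlike $\cP^n_o$) these may differ by one, and you handle that case split (including $\dim P = n-1$, $o\notin\aff P$ giving $d=n$) correctly. One cosmetic slip: in the $d=0$ case you write $\oPhi(\{o\}) = \oPhi(\phi\{o\})\circ\phi^{-t} = \oPhi(\{o\})\circ\phi^{t}$, but since $\phi\{o\}=\{o\}$ the middle expression equals $\oPhi(\{o\})\circ\phi^{-t}$, not $\oPhi(\{o\})\circ\phi^{t}$; the clean chain is simply $\oPhi(\{o\}) = \oPhi(\phi\{o\}) = \oPhi(\{o\})\circ\phi^{t}$, and in any event the conclusion is unaffected because $\phi^{-t}$ also ranges over all of $\sln$.
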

\begin{proof}
	The proof is similar to the proof of \th\ref{oPhi P subseteq lin P}.
\end{proof}

Now we are able to classify all $\sln$-covariant valuations from $\cP^n$ to $\Cp$ which satisfy
certain continuity properties.

\begin{theorem}\th\label{class sln co Pn}
	Let $n \geq 3$ and let $\oPhi \colon \cP^n \rightarrow \Cp$ be an $\sln$-covariant valuation.
	Assume further that for every $y \in \R^n$ there exists a bounded open interval $I_y \subseteq (0,+\infty)$
	such that $\{\oPhi(sT^n)(y): s \in I_y\}$ is not dense in $\R$
	and that for every $y \in \R^n$ there exists a bounded open interval $J_y \subseteq (0,+\infty)$
	such that $\{\oPhi(s \widetilde T^{n-1})(y): s \in J_y\}$ is not dense in $\R$.
	Also assume that $\oPhi$ is continuous at the line segment $[o,e_1]$.
	Then there exist constants $c_i \in \R \ ,i=1\ldots,8$ such that
	\begin{align*}
		\oPhi P &= c_1 \h{\oMpp P}{.}^p + c_2 \h{\oMpm P}{.}^p + c_3 \h{\oMpps P}{.}^p + c_4 \h{\oMpms P}{.}^p \\
		        &+ c_5 \h{\oIpp P}{.}^p + c_6 \h{\oIpm P}{.}^p + c_7 \oJpp P + c_8 \oJpm P
	\end{align*}
	for all $P \in \cP^n$.
\end{theorem}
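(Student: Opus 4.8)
The strategy is to imitate the proof of \th\ref{class sln co}, with \th\ref{class sln co simple} replaced by \th\ref{class sln co almost simple Pn} and \th\ref{sln co simple p homogeneous} replaced by \th\ref{sln co almost simple p homogeneous Pn}. First I would pin down the contributions of the four ``non-moment'' operators on one-dimensional polytopes. By \th\ref{oPhi P subseteq lin P Pn} and the $p$-homogeneity of the functions in $\Cp$, both $\oPhi([o,e_1])$ and $\oPhi([e_1,2e_1])$ depend only on the first coordinate and are therefore determined by their values at $\pm e_1$. Since $\oJpp$, $\oJpm$, $\oMpp$, $\oMpm$, $\oMpps$ and $\oMpms$ all vanish on $[o,e_1]$, and since $\oMpp$, $\oMpm$, $\oMpps$ and $\oMpms$ vanish on $[e_1,2e_1]$ as well (here $n\geq 3$ enters, as $\dim[e_1,2e_1]=1\leq n-2$), \eqref{eq: constant oIpp [o,e_i]} yields constants $c_5,c_6$ and then \eqref{eq: constant oIpp [e_i, 2e_i]} and \eqref{eq: constant oJpp([e_i, 2e_i])} yield constants $c_7,c_8$ such that the $\sln$-covariant valuation
\[
	\oPsi P = \oPhi P - c_5\h{\oIpp P}{.}^p - c_6\h{\oIpm P}{.}^p - c_7\oJpp P - c_8\oJpm P
\]
satisfies $\oPsi([o,e_1])=\oPsi([e_1,2e_1])=0$. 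This $\oPsi$ is again an $\sln$-covariant valuation, it is still continuous at $[o,e_1]$, and it inherits the two non-density assumptions from $\oPhi$: on $sT^n$ and on $s\widetilde T^{n-1}$ each of the four subtracted operators produces only a constant multiple of $s^p$, and subtracting a continuous function of $s$ cannot destroy the failure of density on a bounded open interval.

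The core of the proof is to show that $\oPsi$ is almost simple. By $\sln$-covariance it suffices to prove $\oPsi P=0$ for every polytope $P\subseteq\lin\{e_1,\dots,e_{n-1}\}$, and I would do this by induction on $d=1,\dots,n-1$, the assertion at stage $d$ being that $\oPsi$ vanishes on all polytopes contained in $\lin\{e_1,\dots,e_d\}$. As in \th\ref{class sln co}, \th\ref{oPhi P subseteq lin P Pn} lets one pass to the $\gln[d]$-covariant valuation $\hat\oPsi\colon\cP^d\to\Cp[d]$ obtained by restriction through the natural embedding $\iota_d$ of $\R^d$ in $\R^n$. For $d=1$, a $\gln[1]$-covariant valuation on $\cP^1$ is determined by $\oPsi([o,e_1])(\pm e_1)$ and $\oPsi(\{e_1\})(\pm e_1)$; the first pair vanishes by construction, and the second vanishes because the valuation identity for the subdivision $[o,2e_1]=[o,e_1]\cup[e_1,2e_1]$, combined with $\oPhi([o,2e_1])=2^p\,\oPhi([o,e_1])$ (which follows from $\sln$-covariance, \th\ref{oPhi P subseteq lin P Pn} and $p$-homogeneity), gives $\oPhi(\{e_1\})=\oPhi([o,e_1])+\oPhi([e_1,2e_1])-2^p\,\oPhi([o,e_1])$ on $\lin\{e_1\}$, whence $\oPsi(\{e_1\})=0$ by the relations defining $c_5,\dots,c_8$. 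For $2\leq d\leq n-1$, the induction hypothesis (transported through $\iota_d$) together with $\gln[d]$-covariance shows that $\hat\oPsi$ is an almost simple $\gln[d]$-covariant valuation: it already vanishes on all polytopes in the coordinate hyperplane $\lin\{e_1,\dots,e_{d-1}\}$ of $\R^d$, hence, moving by $\gln[d]$, on every polytope of dimension at most $d-2$ and on every $(d-1)$-dimensional polytope whose affine hull contains the origin. As $\hat\oPsi$ is moreover continuous at $[o,e_1]$, \th\ref{sln co almost simple p homogeneous Pn} gives $\hat\oPsi=0$, which closes the induction; hence $\oPsi$ is almost simple.

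Finally, $\oPsi$ satisfies the hypotheses of \th\ref{class sln co almost simple Pn} (the two non-density assumptions are inherited exactly as above), so there are constants $c_1,c_2,c_3,c_4\in\R$ with $\oPsi P=c_1\h{\oMpp P}{.}^p+c_2\h{\oMpm P}{.}^p+c_3\h{\oMpps P}{.}^p+c_4\h{\oMpms P}{.}^p$ for all $P\in\cP^n$, and unwinding the definition of $\oPsi$ yields the claimed representation. I expect the only real difficulty to be establishing that $\oPsi$ is almost simple --- in particular closing the base case $d=1$, which is what forces the four constants $c_5,\dots,c_8$ to be chosen simultaneously from the two segments $[o,e_1]$ and $[e_1,2e_1]$ via the subdivision $[o,2e_1]=[o,e_1]\cup[e_1,2e_1]$, and which relies on \th\ref{sln co almost simple p homogeneous Pn} (and through it on the delicate two-dimensional bookkeeping behind \th\ref{cP^2 not continuous}) to annihilate the $\gln[d]$-covariant restrictions in dimensions $2\leq d\leq n-1$.
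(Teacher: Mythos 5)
Your proof follows the paper's argument essentially step for step: subtract $c_5\h{\oIpp\cdot}{.}^p+c_6\h{\oIpm\cdot}{.}^p+c_7\oJpp+c_8\oJpm$ with the constants read off from $\oPhi([o,e_1])$ and $\oPhi([e_1,2e_1])$ via \th\ref{oPhi P subseteq lin P Pn}, prove that the resulting $\oPsi$ is almost simple by induction on $d$ using \th\ref{sln co almost simple p homogeneous Pn}, and finish with \th\ref{class sln co almost simple Pn}. One small caveat: the justification ``subtracting a continuous function of $s$ cannot destroy the failure of density on a bounded open interval'' is not true as a blanket statement about arbitrary functions; what actually saves the argument is the Cauchy-equation structure established in step~2 of \th\ref{class sln co simple}, under which $s\mapsto s^{-p/n}\oPsi(s^{1/n}T^n)(y)$ is Cauchy and differs from $s\mapsto s^{-p/n}\oPhi(s^{1/n}T^n)(y)$ by the constant $C(y)$, so non-linearity of the former would force the latter to have a dense graph and hence contradict the non-density hypothesis on $\oPhi$ — a point the paper also leaves implicit, so this is an imprecision in the side remark rather than a structural gap.
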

\begin{proof}
	\th\ref{oPhi P subseteq lin P Pn} and the $p$-homogeneity of the functions in $\Cp$ show that
	\begin{equation}\label{class sln co Pn - constants 1}
		\oPhi([o,e_1])(x) = \oPhi([o,e_1])(x_1 e_1) = |x_1|^p \oPhi([o,e_1])(\sgn (x_1) e_1)
	\end{equation}
	and
	\begin{equation}\label{class sln co Pn - constants 2}
		\oPhi([e_1, 2 e_1])(x) = |x_1|^p \oPhi([e_1, 2 e_1])(\sgn (x_1) e_1)
	\end{equation}
	for all $x \in \R^n$.
	Set $c_5 = \oPhi([o,e_1])(e_1)$, $c_6 = \oPhi([o,e_1])(-e_1)$,
	$c_7 = \oPhi([e_1, 2 e_1])(e_1) - c_5 2^p$ and $c_8 = \oPhi([e_1, 2 e_1])(-e_1) - c_6 2^p$.
	Define $\oPsi \colon \cP^n \rightarrow \Cp$ by
	\begin{equation}
		\oPsi P = \oPhi P - c_5 \h{\oIpp P}{.}^p - c_6 \h{\oIpm P}{.}^p - c_7 \oJpp P - c_8 \oJpm P
	\end{equation}
	for all $P \in \cP^n$.
	Note that $\oPsi$ is also an $\sln$-covariant valuation.
	If we can show that $\oPsi$ is almost simple, then the assertion follows from \th\ref{class sln co almost simple Pn}.
	
	We need to prove that $\oPsi P = 0$ for all $P \in \cP^n$ with $\dim P \leq n-2$
	and for all $P \in \cP^n$ with $\dim P = n-1$ and $o \in \aff P$.
	Using the $\sln$-covariance we can assume without loss of generality that $P \subseteq \lin\{e_1,\ldots,e_{n-1}\}$.
	We will use induction on $d = 1,\ldots,n-1$ to show that
	$\oPsi P = 0$ for all convex polytopes $P \subseteq \lin\{e_1,\ldots,e_d\}$.
	Define $\hat\oPsi \colon \cP^d \rightarrow \Cp[d]$ by
	\begin{equation}
		\hat\oPsi P = \oPsi(\iota_d P) \circ \iota_d
	\end{equation}
	for all $P \in \cP^d$, where $\iota_d$ denotes the natural embedding of $\R^d$ in $\R^n$.
	It is easy to see that $\hat\oPsi$ is a $\gln[d]$-covariant valuation.
	Since $\oPsi$ is $\sln$-covariant, we have $\oPsi(\{o\}) = 0$ by \th\ref{oPhi P subseteq lin P Pn}.
	For $d = 1$ the induction statement follows from \eqref{class sln co Pn - constants 1}, \eqref{class sln co Pn - constants 2},
	\eqref{eq: constant oIpp [o,e_i]}, \eqref{eq: constant oIpp [e_i, 2e_i]}, \eqref{eq: constant oJpp([e_i, 2e_i])},
	the $\gln[d]$-covariance and \th\ref{oPhi P subseteq lin P Pn}.
	For $2 \leq d \leq n-1$ the induction statement follows from
	the induction hypothesis, \th\ref{sln co almost simple p homogeneous Pn} and \th\ref{oPhi P subseteq lin P Pn}.
	This finishes the induction and completes the proof of the theorem.
\end{proof}

Finally we can prove our desired result about continuous $\sln$-covariant $\Lp$-Minkowski valuations on $\cK^n$.
This is one direction of the second main theorem from the introduction.
The other direction is trivial.

\begin{corollary}\th\label{class sln co Pn mink}
	Let $n \geq 3$.
	If $\oPhi \colon \cK^n \rightarrow \cK^n_o$ is a continuous $\sln$-covariant $\Lp$-Minkowski valuation,
	then there exist constants $c_1,c_2,c_3,c_4,c_5,c_6 \geq 0$ such that
	\begin{equation*}
		\oPhi K = c_1 \oMpp K \padd c_2 \oMpm K \padd c_3 \oMpps K \padd c_4 \oMpms K \padd c_5 \oIpp K \padd c_6 \oIpm K
	\end{equation*}
	for all $K \in \cK^n$.
\end{corollary}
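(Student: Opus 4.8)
The plan is to reduce to \th\ref{class sln co Pn} at the level of support functions on polytopes, and then to use that $\oPhi$ takes values in $\cK^n_o$, and not merely in $\Cp$, in order to discard the two operators $\oJpp,\oJpm$ and to fix the signs of the remaining coefficients. The converse inclusion is trivial, as remarked, so only this implication needs proof.

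First I would observe that $P\mapsto\h{\oPhi P}{.}^p$ is a continuous $\sln$-covariant valuation from $\cP^n$ to $\Cp$ satisfying the hypotheses of \th\ref{class sln co Pn}. Continuity of $\oPhi$ gives continuity at $[o,e_1]$, and for a fixed $y$ and any bounded open interval $I_y$ the set $\{\h{\oPhi(sT^n)}{y}^p:s\in I_y\}$ is the continuous image of a relatively compact subset of $\cK^n$, hence bounded and in particular not dense in $\R$; the analogous statement holds with $\widetilde T^{n-1}$ in place of $T^n$. Therefore there are constants $d_1,\dots,d_8\in\R$ such that $\h{\oPhi P}{.}^p$ is the corresponding linear combination of $\h{\oMpp P}{.}^p,\h{\oMpm P}{.}^p,\h{\oMpps P}{.}^p,\h{\oMpms P}{.}^p,\h{\oIpp P}{.}^p,\h{\oIpm P}{.}^p,\oJpp P,\oJpm P$ for all $P\in\cP^n$. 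Since all eight of these operators are continuous on $\cK^n$ and $\cP^n$ is dense there, the identity extends to every $K\in\cK^n$.

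Next I would run the homogeneity scaling argument from the proof of \th\ref{class sln co mink}. Replacing $K$ by $sK$ and using the homogeneity degrees of the operators, the right--hand side becomes $s^{n+p}A_K+s^pB_K$, where $A_K=d_1\h{\oMpp K}{.}^p+d_2\h{\oMpm K}{.}^p+d_3\h{\oMpps K}{.}^p+d_4\h{\oMpms K}{.}^p$ is the $(n+p)$-homogeneous part and $B_K=d_5\h{\oIpp K}{.}^p+d_6\h{\oIpm K}{.}^p+d_7\oJpp K+d_8\oJpm K$ the $p$-homogeneous part. Because $\oPhi(sK)\in\cK^n_o$, each function $s^{n+p}A_K+s^pB_K$ is nonnegative and convex on $\R^n$; dividing by $s^{n+p}$ and letting $s\to\infty$, resp.\ dividing by $s^p$ and letting $s\to0$, and using that pointwise limits preserve nonnegativity and convexity, shows that $A_K$ and $B_K$ are themselves nonnegative and convex for every $K\in\cK^n$. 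Evaluating $A_K$ at $K=T^n$ (where $\oMpps,\oMpms$ vanish) and at $K=\widetilde T^{n-1}$ (where $\oMpp,\oMpm$ vanish) in the directions $\pm e_1$, via \eqref{eq: constant oMpp T^n e_i}, \eqref{eq: constant oMpp T^n -e_i}, \eqref{eq: constant oMpps widetilde T^(n-1) e_i}, \eqref{eq: constant oMpps widetilde T^(n-1) -e_i}, yields $d_1,d_2,d_3,d_4\ge0$; evaluating $B_K$ at $K=[o,e_1]$ (where $\oJpp,\oJpm$ vanish and $\h{\oIpp [o,e_1]}{x}^p=(x_1)_+^p$, $\h{\oIpm [o,e_1]}{x}^p=(x_1)_-^p$ by \eqref{eq: constant oIpp [o,e_i]}) in the directions $\pm e_1$ yields $d_5,d_6\ge0$.

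The step I expect to be the main obstacle is showing $d_7=d_8=0$, i.e.\ that the two operators that are not support--function--valued must drop out; this is where $n\ge3$ is used. I would analyse $B_{\widetilde T^{n-1}}$, which is convex on $\R^n$ by the previous paragraph. Direct computation from the definitions gives $\h{\oIpp \widetilde T^{n-1}}{u}^p=(\max_iu_i)_+^p$, $\h{\oIpm \widetilde T^{n-1}}{u}^p=(\min_iu_i)_-^p$, $\oJpp(\widetilde T^{n-1})(u)=(\min_iu_i)_+^p$ and $\oJpm(\widetilde T^{n-1})(u)=(\max_iu_i)_-^p$, so on the open positive orthant $B_{\widetilde T^{n-1}}(u)=d_5\,(\max_iu_i)^p+d_7\,(\min_iu_i)^p$. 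At an interior point of the positive orthant at which the largest and the smallest coordinate are attained uniquely and at two distinct indices $j\neq k$, the function $B_{\widetilde T^{n-1}}$ agrees near that point with $d_5u_j^p+d_7u_k^p$, whose Hessian carries $p(p-1)d_5u_j^{p-2}$ and $p(p-1)d_7u_k^{p-2}$ on the diagonal and zeros elsewhere; positive semidefiniteness then forces $d_7\ge0$. At an interior point where the largest coordinate is attained uniquely but the smallest one at two further indices (possible precisely because $n\ge3$), the term $(\min_iu_i)^p$ contributes a genuine concave kink to $B_{\widetilde T^{n-1}}$ while the $\max$-term stays smooth, so convexity forces $d_7\le0$; hence $d_7=0$. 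Running the same argument on the negative orthant, equivalently on $-\widetilde T^{n-1}$, gives $d_8=0$. Finally, with $d_1,\dots,d_6\ge0$ and $d_7=d_8=0$, setting $c_i=\sqrt[p]{d_i}$ and using that $K\mapsto\h{K}{.}^p$ is a homomorphism with respect to $\padd$ and scalar multiplication gives $\oPhi K = c_1\oMpp K\padd c_2\oMpm K\padd c_3\oMpps K\padd c_4\oMpms K\padd c_5\oIpp K\padd c_6\oIpm K$ for all $K\in\cK^n$, which is the asserted representation.
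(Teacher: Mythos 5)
Your proposal is correct, and it reproduces the paper's argument verbatim up to and including the derivation of $d_1,\dots,d_6\ge0$; the genuine divergence is in how you kill the coefficients $d_7,d_8$ of $\oJpp,\oJpm$. The paper tests the sublinearity of $B_K^{1/p}$ on two specific two-dimensional bodies, $\conv\{e_1,e_2,e_1+e_2\}$ and $\widetilde T^1=[e_1,e_2]$, using the precomputed values \eqref{eq: constant oIpp oJpp conv (e_1, e_2, e_1+e_2)} and \eqref{eq: constant oIpp oJpp widetilde T^1} together with a monotonicity argument in $d_7$ to squeeze $d_7\le0$ and $d_7\ge0$; this part of the paper's argument does not itself need $n\ge3$. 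You instead evaluate $B$ at the full $(n-1)$-simplex $\widetilde T^{n-1}$, identify it on the open positive orthant with $d_5(\max_iu_i)^p+d_7(\min_iu_i)^p$, and read off the sign constraints from the local second-order structure of a convex function: a smooth patch with unique argmax and argmin gives $d_7\ge0$ via the Hessian, and a patch where the minimum is tied at two indices distinct from the argmax (which is exactly where $n\ge3$ enters for you) gives a concave kink along $v_{k_1}-v_{k_2}$ unless $d_7\le0$. Both are sound; the paper's route is more elementary and self-contained in $\R^2$, relying only on subadditivity at a few directions, while yours is more conceptual, explaining geometrically why $\oJpp,\oJpm$ are incompatible with support-function values, at the cost of a local differentiability/kink analysis.
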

\begin{proof}
	The map
	\begin{equation*}
		P \mapsto \h{\oPhi P}{.}^p, \quad P \in \cP^n_o
	\end{equation*}
	satisfies the assumptions of \th\ref{class sln co Pn}.
	Thus, we get constants $d_1,\ldots,d_8 \in \R$ such that
	\begin{align*}
		\h{\oPhi P}{.}^p
		&= d_1 \h{\oMpp P}{.}^p + d_2 \h{\oMpm P}{.}^p + d_3 \h{\oMpps P}{.}^p + d_4 \h{\oMpms P}{.}^p \\
		&+ d_5 \h{\oIpp P}{.}^p + d_6 \h{\oIpm P}{.}^p + d_7 \oJpp P + d_8 \oJpm P
	\end{align*}
	holds for all $P \in \cP^n$.
	By continuity this holds for all $K \in \cK^n$.
	It remains to show that $d_1,\ldots,d_6 \geq 0$ and that $d_7,d_8 = 0$.
	Similar to the proof of \th\ref{class sln co mink} we consider the map $s \mapsto \h{\oPhi(sK)}{.}, \ s > 0$
	for fixed $K \in \cK^n$ to see that both
	\begin{equation*}
		\left( d_1 \h{\oMpp K}{.}^p + d_2 \h{\oMpm K}{.}^p + d_3 \h{\oMpps K}{.}^p + d_4 \h{\oMpms K}{.}^p \right)^{\frac 1 p}
	\end{equation*}
	and
	\begin{equation}\label{class sln co Pn mink - one}
		\left( d_5 \h{\oIpp K}{.}^p + d_6 \h{\oIpm K}{.}^p + d_7 \oJpp K + d_8 \oJpm K \right)^{\frac 1 p}
	\end{equation}
	are nonnegative and sublinear.
	Using \eqref{eq: constant oMpp T^n e_i}, \eqref{eq: constant oMpp T^n -e_i},
	\eqref{eq: constant oMpps widetilde T^(n-1) e_i}, \eqref{eq: constant oMpps widetilde T^(n-1) -e_i}
	and \eqref{eq: constant oIpp [o,e_i]} we get that $d_1,\ldots,d_6 \geq 0$.
	Now set $K = \conv \{ e_1, e_2, e_1+e_2\}$ in \eqref{class sln co Pn mink - one}.
	Inserting the directions $e_1$, $e_2$ and $e_1+e_2$
	and using the subadditivity and \eqref{eq: constant oIpp oJpp conv (e_1, e_2, e_1+e_2)} gives us
	\begin{equation*}
		\left( 2^p d_5 + d_7 \right)^{\frac 1 p}
		\leq \left( d_5 \right)^{\frac 1 p} + \left( d_5 \right)^{\frac 1 p}
		= 2 \left( d_5 \right)^{\frac 1 p} .
	\end{equation*}
	Therefore we get $d_7 \leq 0$.
	Meanwhile, setting $K = \widetilde T^1$ in \eqref{class sln co Pn mink - one} and
	inserting the directions $e_1$, $e_1+e_2$ and $2e_1+e_2$ gives us by \eqref{eq: constant oIpp oJpp widetilde T^1}
	\begin{equation*}
		0 \leq \left( d_5 \right)^{\frac 1 p} + \left( d_5 + d_7 \right)^{\frac 1 p} - \left( 2^p d_5 + d_7\right)^{\frac 1 p} .
	\end{equation*}
	The right hand side of this equation is equal to $0$ for $d_7 = 0$.
	By differentiating the right hand side with respect to $d_7$
	and by using $p > 1$ we see that it is strictly monotone.
	Therefore we get $d_7 \geq 0$.
	The last two arguments show that $d_7 = 0$.
	Similarly we see that $d_8 = 0$.
	Setting $c_i = \sqrt[p]{d_i}$ for $i = 1,\ldots,6$ finishes the proof.
\end{proof}

	\section*{Acknowledgements}
		The work of the author was supported by the Austrian Science Fund (FWF) project P22388-N13
and the Austrian Science Fund (FWF) project P23639-N18.
During the revision stage of the article, the work of the author was supported by the European Research Council (ERC),
within the Starting Grant project ``Isoperimetric inequalities and integral geometry'', project number: 306445.

	\bibliography{bibliography}

\begin{thebibliography}{10}

\bibitem{Alesker1999}
S.~Alesker.
\newblock Continuous rotation invariant valuations on convex sets.
\newblock {\em Ann. of Math. (2)}, 149(3):977--1005, 1999.

\bibitem{Alesker2001}
S.~Alesker.
\newblock Description of translation invariant valuations on convex sets with
  solution of {P}. {M}c{M}ullen's conjecture.
\newblock {\em Geom. Funct. Anal.}, 11(2):244--272, 2001.

\bibitem{Bernig2009}
A.~Bernig.
\newblock A {H}adwiger-type theorem for the special unitary group.
\newblock {\em Geom. Funct. Anal.}, 19(2):356--372, 2009.

\bibitem{BernigFu2011}
A.~Bernig and J.~H.~G. Fu.
\newblock Hermitian integral geometry.
\newblock {\em Ann. of Math. (2)}, 173(2):907--945, 2011.

\bibitem{CampiGronchi2002}
S.~Campi and P.~Gronchi.
\newblock The {$L^p$}-{B}usemann-{P}etty centroid inequality.
\newblock {\em Adv. Math.}, 167(1):128--141, 2002.

\bibitem{CianchiLutwakYangZhang2009}
A.~Cianchi, E.~Lutwak, D.~Yang, and G.~Zhang.
\newblock Affine {M}oser-{T}rudinger and {M}orrey-{S}obolev inequalities.
\newblock {\em Calc. Var. Partial Differential Equations}, 36(3):419--436,
  2009.

\bibitem{FleuryGuedonPaouris2007}
B.~Fleury, O.~Gu{\'e}don, and G.~Paouris.
\newblock A stability result for mean width of {$L_p$}-centroid bodies.
\newblock {\em Adv. Math.}, 214(2):865--877, 2007.

\bibitem{Gardner2006}
R.~J. Gardner.
\newblock {\em Geometric tomography}, volume~58 of {\em Encyclopedia of
  Mathematics and its Applications}.
\newblock Cambridge University Press, Cambridge, second edition, 2006.

\bibitem{GardnerGiannopoulos1999}
R.~J. Gardner and A.~A. Giannopoulos.
\newblock {$p$}-cross-section bodies.
\newblock {\em Indiana Univ. Math. J.}, 48(2):593--613, 1999.

\bibitem{GardnerKiderlenMilanfar2006}
R.~J. Gardner, M.~Kiderlen, and P.~Milanfar.
\newblock Convergence of algorithms for reconstructing convex bodies and
  directional measures.
\newblock {\em Ann. Statist.}, 34(3):1331--1374, 2006.

\bibitem{Gruber2007}
P.~M. Gruber.
\newblock {\em Convex and discrete geometry}, volume 336 of {\em Grundlehren
  der Mathematischen Wissenschaften [Fundamental Principles of Mathematical
  Sciences]}.
\newblock Springer, Berlin, 2007.

\bibitem{Haberl2008}
C.~Haberl.
\newblock {$L_p$} intersection bodies.
\newblock {\em Adv. Math.}, 217(6):2599--2624, 2008.

\bibitem{Haberl2009}
C.~Haberl.
\newblock Star body valued valuations.
\newblock {\em Indiana Univ. Math. J.}, 58(5):2253--2276, 2009.

\bibitem{Haberl2011}
C.~Haberl.
\newblock Blaschke valuations.
\newblock {\em Amer. J. Math.}, 133(3):717--751, 2011.

\bibitem{Haberl_2}
C.~Haberl.
\newblock Minkowski valuations intertwining with the special linear group.
\newblock {\em J. Eur. Math. Soc. (JEMS)}, 14(5):1565--1597, 2012.

\bibitem{HaberlLudwig2006}
C.~Haberl and M.~Ludwig.
\newblock A characterization of {$L_p$} intersection bodies.
\newblock {\em Int. Math. Res. Not.}, pages Art. ID 10548, 29, 2006.

\bibitem{HaberlParapatits_1}
C.~Haberl and L.~Parapatits.
\newblock Valuations and surface area measures.
\newblock {\em J. Reine Angew. Math.}, to appear.

\bibitem{HaberlSchuster2009_1}
C.~Haberl and F.~E. Schuster.
\newblock Asymmetric affine {$L_p$} {S}obolev inequalities.
\newblock {\em J. Funct. Anal.}, 257(3):641--658, 2009.

\bibitem{HaberlSchuster2009_2}
C.~Haberl and F.~E. Schuster.
\newblock General {$L_p$} affine isoperimetric inequalities.
\newblock {\em J. Differential Geom.}, 83(1):1--26, 2009.

\bibitem{Kiderlen2006}
M.~Kiderlen.
\newblock Blaschke- and {M}inkowski-endomorphisms of convex bodies.
\newblock {\em Trans. Amer. Math. Soc.}, 358(12):5539--5564, 2006.

\bibitem{Kiderlen2008}
M.~Kiderlen.
\newblock Stability results for convex bodies in geometric tomography.
\newblock {\em Indiana Univ. Math. J.}, 57(5):1999--2038, 2008.

\bibitem{Klain2000}
D.~A. Klain.
\newblock Even valuations on convex bodies.
\newblock {\em Trans. Amer. Math. Soc.}, 352(1):71--93, 2000.

\bibitem{KlainRota1997}
D.~A. Klain and G.-C. Rota.
\newblock {\em Introduction to geometric probability}.
\newblock Lezioni Lincee. [Lincei Lectures]. Cambridge University Press,
  Cambridge, 1997.

\bibitem{Ludwig2002_1}
M.~Ludwig.
\newblock Projection bodies and valuations.
\newblock {\em Adv. Math.}, 172(2):158--168, 2002.

\bibitem{Ludwig2002_2}
M.~Ludwig.
\newblock Valuations of polytopes containing the origin in their interiors.
\newblock {\em Adv. Math.}, 170(2):239--256, 2002.

\bibitem{Ludwig2003}
M.~Ludwig.
\newblock Ellipsoids and matrix-valued valuations.
\newblock {\em Duke Math. J.}, 119(1):159--188, 2003.

\bibitem{Ludwig2005}
M.~Ludwig.
\newblock Minkowski valuations.
\newblock {\em Trans. Amer. Math. Soc.}, 357(10):4191--4213, 2005.

\bibitem{Ludwig2006}
M.~Ludwig.
\newblock Intersection bodies and valuations.
\newblock {\em Amer. J. Math.}, 128(6):1409--1428, 2006.

\bibitem{LudwigReitzner1999}
M.~Ludwig and M.~Reitzner.
\newblock A characterization of affine surface area.
\newblock {\em Adv. Math.}, 147(1):138--172, 1999.

\bibitem{LudwigReitzner2010}
M.~Ludwig and M.~Reitzner.
\newblock A classification of {${\rm SL}(n)$} invariant valuations.
\newblock {\em Ann. of Math. (2)}, 172(2):1219--1267, 2010.

\bibitem{Lutwak1990}
E.~Lutwak.
\newblock Centroid bodies and dual mixed volumes.
\newblock {\em Proc. London Math. Soc. (3)}, 60(2):365--391, 1990.

\bibitem{LutwakYangZhang2000_1}
E.~Lutwak, D.~Yang, and G.~Zhang.
\newblock {$L_p$} affine isoperimetric inequalities.
\newblock {\em J. Differential Geom.}, 56(1):111--132, 2000.

\bibitem{LutwakYangZhang2000_2}
E.~Lutwak, D.~Yang, and G.~Zhang.
\newblock A new ellipsoid associated with convex bodies.
\newblock {\em Duke Math. J.}, 104(3):375--390, 2000.

\bibitem{LutwakYangZhang2002_2}
E.~Lutwak, D.~Yang, and G.~Zhang.
\newblock Sharp affine {$L_p$} {S}obolev inequalities.
\newblock {\em J. Differential Geom.}, 62(1):17--38, 2002.

\bibitem{LutwakYangZhang2004}
E.~Lutwak, D.~Yang, and G.~Zhang.
\newblock Moment-entropy inequalities.
\newblock {\em Ann. Probab.}, 32(1B):757--774, 2004.

\bibitem{McMullen1993}
P.~McMullen.
\newblock Valuations and dissections.
\newblock In {\em Handbook of convex geometry, {V}ol.\ {A}, {B}}, pages
  933--988. North-Holland, Amsterdam, 1993.

\bibitem{Parapatits_1}
L.~Parapatits.
\newblock {$SL(n)$}-contravariant {$L_p$}-{M}inkowski valuations.
\newblock {\em Trans. Amer. Math. Soc.}, to appear.

\bibitem{ParapatitsSchuster_1}
L.~Parapatits and F.~E. Schuster.
\newblock The {S}teiner formula for {M}inkowski valuations.
\newblock {\em Adv. Math.}, 230(3):978--994, 2012.

\bibitem{Petty1961}
C.~M. Petty.
\newblock Centroid surfaces.
\newblock {\em Pacific J. Math.}, 11:1535--1547, 1961.

\bibitem{Schneider1993}
R.~Schneider.
\newblock {\em Convex bodies: the {B}runn-{M}inkowski theory}, volume~44 of
  {\em Encyclopedia of Mathematics and its Applications}.
\newblock Cambridge University Press, Cambridge, 1993.

\bibitem{SchneiderSchuster2006}
R.~Schneider and F.~E. Schuster.
\newblock Rotation equivariant {M}inkowski valuations.
\newblock {\em Int. Math. Res. Not.}, pages Art. ID 72894, 20, 2006.

\bibitem{SchneiderWeil2008}
R.~Schneider and W.~Weil.
\newblock {\em Stochastic and integral geometry}.
\newblock Probability and its Applications (New York). Springer-Verlag, Berlin,
  2008.

\bibitem{Schuster2010}
F.~E. Schuster.
\newblock Crofton measures and {M}inkowski valuations.
\newblock {\em Duke Math. J.}, 154(1):1--30, 2010.

\bibitem{SchusterWannerer2012}
F.~E. Schuster and T.~Wannerer.
\newblock {${\rm GL}(n)$} contravariant {M}inkowski valuations.
\newblock {\em Trans. Amer. Math. Soc.}, 364(2):815--826, 2012.

\bibitem{Steineder2008}
C.~Steineder.
\newblock Subword complexity and projection bodies.
\newblock {\em Adv. Math.}, 217(5):2377--2400, 2008.

\bibitem{Volland1957}
W.~Volland.
\newblock Ein {F}ortsetzungssatz f\"ur additive {E}ipolyederfunktionale im
  euklidischen {R}aum.
\newblock {\em Arch. Math. (Basel)}, 8:144--149, 1957.

\bibitem{Wannerer_1}
T.~Wannerer.
\newblock {${\rm GL}(n)$} equivariant {M}inkowski valuations.
\newblock {\em Indiana Univ. Math. J.}, 60(5):1655--1672, 2011.

\bibitem{WernerYe2008}
E.~Werner and D.~Ye.
\newblock New {$L_p$} affine isoperimetric inequalities.
\newblock {\em Adv. Math.}, 218(3):762--780, 2008.

\bibitem{YaskinYaskina2006}
V.~Yaskin and M.~Yaskina.
\newblock Centroid bodies and comparison of volumes.
\newblock {\em Indiana Univ. Math. J.}, 55(3):1175--1194, 2006.

\bibitem{Zhang1999}
G.~Zhang.
\newblock The affine {S}obolev inequality.
\newblock {\em J. Differential Geom.}, 53(1):183--202, 1999.

\end{thebibliography}
	\bibliographystyle{plain}

	\vspace{1cm}
	\begin{flushleft}
		Lukas Parapatits \\
		Vienna University of Technology \\
		Institute of Discrete Mathematics and Geometry \\
		Wiedner Hauptstraße 8-10/104 \\
		1040 Vienna, Austria \\
		lukas.parapatits@tuwien.ac.at \\
	\end{flushleft}
\end{document}